\documentclass[twocolumn]{autart}  
\usepackage{amsmath,amsfonts,amssymb}
\usepackage{algorithm}
\usepackage{algpseudocode}
\usepackage{array}
\usepackage{xcolor}
\usepackage{flushend}
\usepackage{textcomp}
\usepackage{stfloats}
\usepackage{url}
\usepackage{verbatim}
\usepackage{graphicx}
\usepackage{caption}
\usepackage{subcaption}
\usepackage{booktabs}
\usepackage{mathrsfs}
\usepackage{tikz}
\usetikzlibrary{arrows.meta, decorations.pathreplacing}

\begin{document}

\begin{frontmatter}

\title{Robust Constrained Optimization via Sliding Mode Control} 
\author[a]{Shyam Kamal*}\ead{shyamkamal.eee@iitbhu.ac.in},
\author[a]{Baby Diana*}\ead{babydiana.rs.eee21@itbhu.ac.in},
\author[a]{Sunidhi Pandey}\ead{sunidhipandey.rs.eee20@itbhu.ac.in}, 
\author[a]{Sandip Ghosh}\ead{sghosh.eee@iitbhu.ac.in } 
\author[b,c]{Thach Ngoc Dinh} \ead{ngocthach.dinh@lecnam.net}
\thanks{* Shyam Kamal and Baby Diana contributed significantly and are the primary contributors to this work.}

\address[a]{Dept of Electrical Engineering, IIT(BHU) Varanasi, UP, India} 
\address[b]{Cedric-Lab,Conservatoire National des Arts et M\'etiers, 75141 Paris, France}
\address[c]{Hanse-Wissenschaftskolleg (Institute for Advanced Study), Lehmkuhlenbusch 427753 Delmenhorst, Germany}

\thanks{Supported by the CDRT project, IIT (BHU), Varanasi, under Grant No. R\&D/SA/I-DAPTIIT(BHU)/EE/22-23/08/448.}

\begin{keyword}                           
Constrained optimization, Lagrangian dynamics, Sliding mode control, Finite-time stability (FTS).
\end{keyword}
\maketitle
\thispagestyle{empty}
\begin{abstract}
This paper develops a sliding mode control–based framework for equality-constrained optimization by reformulating the first-order Karush–Kuhn–Tucker (KKT) conditions as a control-affine dynamical system. The optimization variables are treated as states and the Lagrange multipliers as control inputs, with equality constraints defining the sliding manifold. The resulting design guarantees exact constraint enforcement with finite-time convergence, independent of objective convexity, and exhibits robustness to matched disturbances, structural uncertainty and $L_\infty$-bounded measurement noise. To accelerate convergence, a nonsingular terminal sliding mode–based normed gradient flow is introduced, ensuring both finite-time convergence to optimal solutions and constraint satisfaction. Rigorous Lyapunov analysis establishes closed-loop stability and convergence. Numerical studies across diverse benchmark problems demonstrate superior accuracy and robustness over classical continuous-time optimization methods, highlighting effectiveness under disturbances.
\end{abstract}

\end{frontmatter}
\section{Introduction}
 Continuous-time optimization models iterative algorithms as differential equations, arising as infinitesimal-step limits of discrete methods, and provide a powerful framework for analyzing stability and convergence in large-scale convex and nonconvex problems. Gradient flow is the continuous-time analogue of gradient descent, while proximal minimization emerges from backward Euler discretization \cite{parikh2014proximal}. Continuous-time optimization schemes often construct descent directions by combining projected gradient and Gauss–Newton terms to drive feasibility \cite{yamashita1980differential}. These ideas have been extended to inequality-constrained problems by formulating first-order dynamics based on KKT conditions to enforce constraint satisfaction \cite{schropp2000dynamical,zhou2007convergence}.

A substantial body of research has explored continuous-time methods based on Lagrange multipliers, with particular emphasis on primal-dual approaches. The foundational continuous-time method in this context is the primal-dual gradient dynamics (PDGD), first introduced in \cite{arrow1958studies}. Building on this, \cite{qu2018exponential} established the exponential stability of PDGD for strongly convex and smooth objective functions, while subsequent work in \cite{dhingra2018proximal} extended these results to nonsmooth composite optimization problems. Most existing literature emphasizes the continuous-time analysis of known algorithms rather than designing new continuous-time optimization methods. A notable exception is \cite{allibhoy2023control}, which proposes a continuous-time dynamical system using control barrier functions (CBFs) to ensure forward invariance and stability of the feasible set, enabling safe operation in constrained environments. In \cite{feng2020dynamical}, the authors interpret constrained optimization as a perturbed projected dynamical system, linking the escape from local minima to the departure from regions of attraction in the system's behaviour.

In \cite{cerone2025new}, the authors propose a novel continuous-time framework for convex and non-convex equality-constrained optimization, called Controlled Multipliers Optimization (CMO). The approach treats Lagrange multipliers as control inputs in a feedback system, where the constraints act as outputs to be regulated. Two specific control laws—proportional-integral (PI-CMO) and feedback linearization (FL-CMO)—are developed and analyzed. The framework enables the synthesis of new first-order algorithms, with proven convergence in both convex and non-convex settings. 

Continuous-time optimization methods often suffer from asymptotic stability (AS) and approximate constraint satisfaction, especially in non-convex settings. Classical approaches such as gradient flow \cite{schropp2000dynamical}, PDGD \cite{arrow1958studies}, PI-CMO \cite{cerone2025new}, and CBF-based methods \cite{allibhoy2023control} do not ensure exact feasibility in finite time. In contrast, the proposed sliding mode control (SMC) framework guarantees finite-time convergence to the constraint set and robust performance under disturbances. To the best of the authors’ knowledge, such a combination has not been achieved in existing methods beyond convex settings. Although the discontinuous nature of SMC may introduce chattering, practical smoothing strategies are provided to mitigate this effect while preserving the key properties of the proposed approach.

Accordingly, we reformulate the first-order optimality conditions as a dynamical system, following \cite{cerone2025new}, where the optimization variables $x(t)$ are interpreted as system states, the Lagrange multipliers $\lambda(x(t))$ serve as control inputs, and the constraint violations $h(x(t))$ are treated as regulated outputs. This control-theoretic formulation enables robust constraint enforcement with finite-time convergence properties, leveraging sliding mode concepts as developed in \cite{utkin2013sliding,utkin2003variable}. The contributions of this work are as follows:
\begin{enumerate}
    \item \textit{Design of SMC Law:} A novel SMC strategy is proposed for the Lagrange multipliers, guaranteeing finite-time convergence of the constraint violations $h(x(t)) \equiv 0$, irrespective of the convexity of the objective function.
    \item \textit{Robustness Analysis:} The robustness of the proposed approach is analyzed under various classes of disturbances, including matched uncertainties, structural model uncertainties, and measurement noise.
    \item \textit{Finite-time optimality for convex problems:} A nonsingular terminal sliding mode–based normed gradient flow is proposed, ensuring robust finite-time convergence to the global optimum under matched disturbances while preserving feasibility.
    \item \textit{Lyapunov-Based Analysis:} A rigorous Lyapunov-based analysis is presented, establishing exact constraint satisfaction in finite time prior to reaching optimality.
    \item \textit{Numerical Simulation:} The approach is validated on nonconvex optimization, obstacle avoidance, and Shidoku feasibility problems, and on a convex distributed parameter estimation problem, demonstrating superior robustness compared to PDGD, PI-CMO and projected gradient flow (PGF).
\end{enumerate}
The paper is organized as follows. Section II presents the problem formulation and the proposed methodology. Section III develops the main results, including the SMC-based optimization framework along with finite-time convergence and robustness analysis. Section IV illustrates the application results and provides a detailed discussion. Section V concludes the paper. Section VI (Appendix) provides a super-twisting algorithm (STA) design to mitigate chattering effects.
%%%%%%%%%%%%%%%%%%%%%%%%%%%%%%%%%%%%%%%%%%%%%%%%%%%%%
\section{Methodology}
\subsection{Notations}
Let $\mathbb{R}^n$ denote the set of real column vectors of dimension $n$. The $p$-norm $\|x\|_p$ for $p \in \{1,2,\infty\}$ is defined by: $\|x\|_1 = \sum |x_i|$, $\|x\|_2$ (Euclidean norm), and $\|x\|_\infty = \max |x_i|$. The absolute value of a scalar is denoted by $|\cdot|$. For a matrix $A \in \mathbb{R}^{m \times n}$, the range is $\mathrm{range}(A) = \{Ax\}$, and the kernel is $\ker(A) = \{x : Ax = 0\}$. The Jacobian of a function $h : \mathbb{R}^n \to \mathbb{R}^m$ is the matrix $J_h(x) \in \mathbb{R}^{m \times n}$ with entries $\frac{\partial h_i}{\partial x_j}$. The closed ball of radius $\delta > 0$ is $B_\delta(0) = \{x : \|x\|_2 \leq \delta\}$. The sign function is $\operatorname{sign}(x) = x/|x|$ for $x \neq 0$. The set-valued signum function is defined as $\mathrm{SGN}(S(x)_i) = \{+1\}$ if $S(x)_i > 0$, $[-1,1]$ if $S(x)_i = 0$, and $\{-1\}$ if $S(x)_i < 0$. A differential inclusion $\dot{x} \in \mathcal{F}(x)$ becomes an equality $\dot{x} = \mathcal{F}(x)$ when $\mathcal{F}(x)$ is a singleton. A function $\phi \in \mathcal{C}^1(\mathbb{R}^n,\mathbb{R})$ is $\mu$-strongly convex $(\mu>0)$ if $\phi(y) \ge \phi(x) + \nabla \phi(x)^\top (y-x) + \frac{\mu}{2}\|y-x\|^2,
\quad \forall x,y \in \mathbb{R}^n$.

\subsection{Preliminaries}
Consider an optimization problem with equality constraints:
\begin{equation}
\min_{x \in \mathbb{R}^n} \phi(x) \quad \text{subject to} \quad h(x) = 0,
\label{eq:op}
\end{equation}
where \( \phi: \mathbb{R}^n \rightarrow \mathbb{R} \) and \( h: \mathbb{R}^n \rightarrow \mathbb{R}^m \) are continuously differentiable (possibly non-convex) functions. The Lagrangian is defined as:
% \begin{equation}
$\mathcal{L}(x, \lambda) = \phi(x) + \lambda^\top h(x)$,
% \label{eq:lang_fun}
% \end{equation}
with Lagrange multipliers \( \lambda \in \mathbb{R}^m \).

\begin{lem}
\label{lem: FONC_SONC}
Let \( x^* \) be a local minimum of \( \phi \), where \( h \) is continuously differentiable and the constraint qualification \( \text{rank}(J_h(x^*)) = m \) holds. Then, there exists \( \lambda^* \in \mathbb{R}^m \) such that:

\begin{itemize}
    \item \textit{First-order necessary condition (FONC):}
    \begin{equation}
        \nabla_x \mathcal{L}(x^*, \lambda^*) = \nabla \phi(x^*) + J_h(x^*)^\top \lambda^* = 0, ~~h(x^*) = 0, 
    \label{eq:fonc}
    \end{equation} 
where \( J_h(x) \in \mathbb{R}^{m \times n} \) is the Jacobian of the constraint function \( h(x) \).

    \item \textit{Second-order necessary condition (SONC):} \\
    For non-convex problems, the first-order conditions identify critical points that may be minima, maxima, or saddle points. The second-order condition uses the Hessian \( \nabla^2_{xx} \mathcal{L}(x^*, \lambda^*) \) to further classify the point:
    % \begin{equation}
$    d^\top \nabla^2_{xx} \mathcal{L}(x^*, \lambda^*) d \geq 0 \quad \forall d \in \ker(J_h(x^*))$.
    % \label{eq:sonc}
    % \end{equation}
\end{itemize}

\end{lem}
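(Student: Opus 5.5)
The plan is to prove the statement as the classical Lagrange multiplier theorem, with a tangent-curve argument built from the implicit function theorem. Throughout, $x^*$ is read as a local minimizer of $\phi$ on the feasible set $\{x : h(x)=0\}$. The second-order statement involves $\nabla^2_{xx}\mathcal{L}$, so for that part I will additionally assume $\phi$ and $h$ are $\mathcal{C}^2$. Write $J := J_h(x^*)$, which has full row rank $m$ by the constraint qualification.

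\emph{Step 1 (feasible curves).} For any $d \in \ker(J)$, I look for a feasible curve of the form $x(t) = x^* + t d + J^\top u(t)$ with $u(t)\in\mathbb{R}^m$. Define $F(t,u) := h(x^* + t d + J^\top u)$. Then $F(0,0)=0$, and $\partial_u F(0,0) = J J^\top$, which is invertible because $J$ has full row rank. The implicit function theorem therefore gives a $\mathcal{C}^1$ map $u(t)$ near $t=0$ (it is $\mathcal{C}^2$ if $h$ is $\mathcal{C}^2$) with $u(0)=0$ and $F(t,u(t))\equiv 0$.

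Differentiating the identity $F(t,u(t))\equiv 0$ at $t=0$ gives $J d + J J^\top \dot u(0) = 0$. Since $Jd = 0$, this forces $\dot u(0)=0$. Hence $x(t)$ is feasible, satisfies $x(0)=x^*$, and has $\dot x(0)=d$.

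\emph{Step 2 (FONC).} Because $x^*$ is a local constrained minimizer, the scalar function $t\mapsto \phi(x(t))$ has a local minimum at $t=0$. Its derivative there must vanish: $\nabla\phi(x^*)^\top d = 0$.

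This holds for every $d\in\ker(J)$, so $\nabla\phi(x^*) \in \ker(J)^\perp = \mathrm{range}(J^\top)$. Thus there is $\lambda^*$ with $\nabla\phi(x^*) + J^\top\lambda^* = 0$, and $h(x^*)=0$ holds by feasibility. This $\lambda^*$ is unique, since $J^\top$ is injective.

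\emph{Step 3 (SONC).} Along the same curve, set $g(t) := \mathcal{L}(x(t),\lambda^*)$. Since $h(x(t))\equiv 0$, we have $g(t) = \phi(x(t))$, so $g$ also has a local minimum at $t=0$. By Step 2, $g'(0) = \nabla_x\mathcal{L}(x^*,\lambda^*)^\top d = 0$, so the second-order condition for a one-variable minimum gives $g''(0)\ge 0$.

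Computing the second derivative,
\[
g''(0) = d^\top \nabla^2_{xx}\mathcal{L}(x^*,\lambda^*)\, d + \nabla_x\mathcal{L}(x^*,\lambda^*)^\top \ddot x(0).
\]
The last term vanishes by the FONC. This leaves $d^\top \nabla^2_{xx}\mathcal{L}(x^*,\lambda^*)\, d \ge 0$ for all $d\in\ker(J)$, which is the SONC.

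\emph{Main obstacle.} The delicate point is Step 1: constructing a feasible curve with prescribed tangent $d$ and enough smoothness that $\ddot x(0)$ exists. This is exactly where the rank condition is used, through the invertibility of $JJ^\top$. It is also why the $\mathcal{C}^2$ hypothesis is needed for the second-order part. Stating the argument via $\mathcal{L}$, rather than $\phi$ alone, is what removes any need to know $\ddot x(0)$ explicitly.
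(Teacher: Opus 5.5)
Your argument is correct and is essentially the classical proof the paper relies on. The paper gives no proof of its own and defers to the standard Lagrange/KKT theory in Luenberger, where exactly this implicit-function-theorem curve $x^* + td + J^\top u(t)$ identifies $\ker(J_h(x^*))$ as the tangent space, and the first- and second-order conditions are then read off along it. Your added hypotheses, namely reading $x^*$ as a local minimizer on $\{x : h(x)=0\}$ and requiring $\phi, h \in \mathcal{C}^2$ for the SONC, are genuinely needed, since the statement as written only assumes $\mathcal{C}^1$ and literally says ``local minimum of $\phi$''.
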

The FONC and SONC for equality-constrained optimization problems arise naturally from the KKT framework, which generalizes the method of Lagrange multipliers \cite{luenberger1984linear}.
%%%%%%%%%%%%%%%%%%%%%%%%%%%%%%%%%%%%%%%%%%%%%%%%%%%%%%%%%%%%%%%%%%%%%%%%%%%

\subsection{Problem Formulation}
Consider the constrained optimization problem \eqref{eq:op}, motivated by a dynamical systems interpretation of the stationarity conditions \eqref{eq:fonc}:
\begin{equation}
\mathcal{H}: 
\begin{cases}
\dot{x}(t) = -\nabla \phi(x(t)) - J_h(x(t))^\top \lambda(x(t)), \label{eq: gf dynamics} \\
y(t) = h(x(t)),
\end{cases}
\end{equation}
where $\lambda(x(t))$ is treated as a control input, and $y(t)$ serves as the regulated output i.e. the constraint surface. This control-oriented perspective motivates the design of a feedback controller for $\lambda(x(t))$ that drives the system toward satisfaction of the KKT conditions. An equilibrium $(x^*, \lambda^*)$ of the system $\mathcal{H}$ qualifies as a stationary point if and only if the constraint condition $h(x^*) = 0$ is satisfied, as established in \cite{cerone2025new}.\\
\textbf{Problem statement:} 
\begin{enumerate}
    \item Design a feedback control law $\lambda$ for the system $\mathcal{H}$ such that the state $x(t)$ converges to the optimal solution $x^*$ asymptotically, and the constraint violation $y(t)$ vanishes in finite time. Specifically, ensure that 
% \begin{equation}
$\lim_{t \to \infty} x(t) = x^*, \lim_{t \to T} ~y(t) = 0$,
% \end{equation}
for some $T \in [0, \infty]$.
\item  Design an NTSMC control law $\lambda$ to guarantee finite-time reaching of the sliding manifold, while the gradient term in \eqref{eq: gf dynamics} is replaced by a normed gradient flow to ensure finite-time convergence to optimum point under a $\mu$-strongly convex objective $\phi$.
\end{enumerate}
To address this, we develop an SMC strategy in the following sections.\footnote{For clarity, dependence on $t$ and $x$ is omitted when clear; the sliding variable and manifold are both denoted by $s(x)$.}

%%%%%%%%%%%%%%%%%%%%%%%%%%%%%%%%%%%%%%%%%%%%%%%%%%%%%%%%%%%%%%
\section{Main result}
\subsection{Proposed Sliding Mode Controller}
Consider the dynamical system \eqref{eq: gf dynamics}. The sliding surface is constructed based on the output to enforce the constraint, and is given by:
\begin{equation}\label{eq: SF}
S(x) = \{x \in \mathbb{R}^n \mid h(x) = 0\}.
\end{equation}
The control objective is to ensure sliding variable i.e. \( S(x)=h(x)\) to 0 in finite time, and the closed-loop system is driven toward a stationary point $(x^*, \lambda^*)$ satisfying the first-order necessary conditions \eqref{eq:fonc}.
To enforce the constraint $h(x) \to 0$ in $0<T<\infty$, we propose the following discontinuous feedback law for 
% \begin{equation}
$\lambda(x) = -\left(J_h(x) J_h(x)^\top\right)^{-1} \big [J_h(x) \nabla \phi(x) - K \mathrm{sgn}(h(x))\big]$,
% \label{eq:smc}
% \end{equation}
where $K \in \mathbb{R}^{m \times m}$ is a diagonal positive-definite gain matrix, and $\mathrm{sgn}(\cdot)$ denotes the elementwise sign function.
% The first term ensures alignment with the gradient descent direction projected onto the constraint manifold, while the second term induces a sliding mode that robustly drives the constraint violation $h(x(t))$ to zero in finite time.% Under this control law, the sliding mode dynamics ensure convergence to the constraint surface, while the projected gradient component ensures descent along the objective within the feasible region.
The control input \( \lambda(x) \) consists of: (I) an \textit{equivalent control} \( \lambda_{eq}(x) \) to maintain \( S(x) = 0 \) and (II) a \textit{switching term} \( \lambda_{sw}(x) \) to drive \( S(x) \rightarrow 0 \). Thus,
% \begin{equation}
$\lambda(x) = \lambda_{eq}(x) + \lambda_{sw}(x)$. 
% \label{eq: control}
% \end{equation}
The sliding mode control design is based on Utkin's principle of equivalent control \cite{utkin2003variable}, which ensures the existence of sliding motion when the reachability condition is satisfied, as established in \cite{utkin2013sliding}.
%%%%%%%%%%%%%%%%%%%%%%%%%%**********************%%%%%%%%%%%%%%%%
\begin{thm} Consider the controlled dynamical system $\mathcal{H}$ defined in \eqref{eq: gf dynamics} with sliding manifold \eqref{eq: SF} defined by the equality constraints. A sliding mode on $S(x)$ exists if the following conditions hold:

\begin{enumerate}
\item \textit{Invariance Condition}: There exists an equivalent control input $\lambda_{eq}(x) \in \mathbb{R}^m$ that renders $S(x)$ invariant,
$J_h(x)\dot{x} \big|_{\lambda=\lambda_{eq}(x)} = 0,$
which yields
% \begin{equation}\label{eq:invariance_condition}
  $ \lambda_{eq}(x) = -(J_h(x)J_h(x)^\top)^{-1}J_h(x)\nabla \phi(x)$. 
% \end{equation}
\item \textit{Reachability Condition}: The switching control component is given by, 
% \begin{equation} \label{eq:switching control}
    $\lambda_{sw}(x) = (J_h(x)J_h(x)^\top)^{-1} K  \mathrm{sgn}(h(x))$
% \end{equation}
where $K = \operatorname{diag}(k_1, \dots, k_m)$,
guarantees finite-time convergence to $S(x)$ with reaching time bounded by, $t_r \leq \frac{\sqrt{2||h(0)||_2}}{k_{\min}}$, where $k_{\min} = \min_i(k_i), i=1,2 \dots m$.
\end{enumerate} 
\end{thm}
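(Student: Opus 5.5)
The plan is to work directly with the output dynamics $y = h(x)$ along the closed loop and to treat the two conditions separately. The standing assumption is that the constraint qualification of Lemma~\ref{lem: FONC_SONC} holds not only at $x^*$ but along the trajectory, i.e.\ $\operatorname{rank}(J_h(x)) = m$ on the region visited. Then $J_h(x)J_h(x)^\top$ is invertible and both $\lambda_{eq}$ and $\lambda_{sw}$ are well defined. Solutions of the discontinuous closed loop are understood in the Filippov sense, with $\mathrm{sgn}$ replaced by $\mathrm{SGN}$ on $\{h_i = 0\}$.

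\emph{Invariance.} By the chain rule and \eqref{eq: gf dynamics},
\[
\dot h = J_h(x)\dot x = -J_h(x)\nabla\phi(x) - J_h(x)J_h(x)^\top \lambda .
\]
Imposing $\dot h = 0$ is a linear equation in $\lambda$ with invertible coefficient matrix $J_hJ_h^\top$. Its unique solution is exactly $\lambda_{eq}(x) = -(J_hJ_h^\top)^{-1}J_h\nabla\phi$, which gives condition (1). The same computation shows that, under $\lambda_{eq}$ alone, $\dot x = -(I - J_h^\top(J_hJ_h^\top)^{-1}J_h)\nabla\phi$. This is the projected gradient onto $\ker J_h$, so the sliding motion is tangent to $S(x)$.

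\emph{Reachability.} Substitute $\lambda = \lambda_{eq} + \lambda_{sw}$. The $\nabla\phi$ terms cancel, and the full-rank factor cancels against its inverse, leaving the decoupled output dynamics
\[
\dot h = -K\,\mathrm{sgn}(h), \qquad \text{i.e.}\quad \dot h_i = -k_i\,\mathrm{sgn}(h_i).
\]
Notably, these dynamics are independent of $\phi$; this is why no convexity is needed. Take the Lyapunov function $V = \tfrac12\|h\|_2^2$. Then
\[
\dot V = -\sum_i k_i|h_i| \le -k_{\min}\|h\|_1 \le -k_{\min}\|h\|_2 = -k_{\min}\sqrt{2V}.
\]
Integrating $\tfrac{d}{dt}\sqrt{V} \le -k_{\min}/\sqrt2$ gives $V(t) = 0$ for all $t \ge t_r$, with $t_r$ bounded by a constant multiple of $\sqrt{V(0)}/k_{\min}$. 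The componentwise solution $|h_i(t)| = \max(|h_i(0)| - k_i t, 0)$ gives the sharper bound $t_r \le \|h(0)\|_\infty/k_{\min}$. Once $h=0$ is reached, $\dot V \le 0$ keeps it there, and the Filippov set-valued $\mathrm{SGN}$ contains $0$. Hence the equivalent control is selected, and the motion remains on $S(x)$: this is a sliding mode in Utkin's sense.

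\emph{Main obstacle.} The delicate step is not the Lyapunov computation but the well-posedness of the trajectory up to $t_r$. One must ensure that $x(t)$ exists (no finite escape) and stays in a region where $J_hJ_h^\top$ is uniformly invertible. Otherwise $\lambda$ blows up and the cancellation used above fails. I would first handle this by assuming the rank condition on a forward-invariant set containing $x(0)$, e.g.\ a neighborhood of $S(x)$. Since $h$ is monotonically decreasing in norm, the trajectory stays in the sublevel set $\{\|h(x)\|_2 \le \|h(0)\|_2\}$, so it suffices to require full rank there. Together with local Lipschitzness of $\nabla\phi$ and $J_h$, this rules out escape before $t_r$.

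Finally, the explicit constant must be reconciled with the stated form $\sqrt{2\|h(0)\|_2}/k_{\min}$. The Lyapunov argument most naturally yields $\sqrt{2V(0)}/k_{\min} = \|h(0)\|_2/k_{\min}$. I would present this bound and interpret the statement's expression as this estimate written in terms of $V(0)$.
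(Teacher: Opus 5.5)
Your proof is correct and is essentially the paper's argument: you solve $J_h\dot x=0$ for $\lambda_{eq}$ using invertibility of $J_hJ_h^\top$, then take $V=\tfrac12\|h\|_2^2$, use $\|h\|_1\ge\|h\|_2$ to get $\dot V\le -k_{\min}\sqrt{2V}$, and integrate. Your reading of the constant as $\sqrt{2V(0)}/k_{\min}=\|h(0)\|_2/k_{\min}$ is the right one. The literal $\sqrt{2\|h(0)\|_2}/k_{\min}$ is not a valid bound: for $m=1$, $k=1$, $h(0)=8$ the exact reaching time is $8>4$. The paper's own closing expression $\sqrt{2}\|h(0)\|_2/k_{\min}$ is merely a looser valid bound.

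One caveat concerns your added well-posedness step. The sublevel set $\{\|h(x)\|_2\le\|h(0)\|_2\}$ contains the manifold and is typically unbounded, so full rank plus local Lipschitzness does not exclude finite escape before $t_r$. For example, $h=x_2$, $\phi=-x_1^4$, $x(0)=(1,1)$, $k=1$ gives $\dot x_1=4x_1^3$, which escapes at $t=1/8<t_r=1$. You therefore need an explicit hypothesis that the trajectory stays, up to $t_r$, in a compact set where $\operatorname{rank}J_h=m$. The paper's proof also assumes this tacitly.
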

\begin{pf}
We prove both conditions of Theorem separately.\\
\noindent\textit{Part 1 (Invariance Condition):}\\
To ensure that the system trajectory remains on the sliding manifold defined by \eqref{eq: SF}, its time derivative must satisfy
% \begin{equation*}
    $\frac{d}{dt} h(x(t)) = J_h(x)\dot{x} = 0$. 
    % \label{eq:slide_condition}
% \end{equation*}
Substituting the system dynamics $\dot{x} = -\nabla \phi(x) - J_h(x)^\top \lambda_{\mathrm{eq}}(x)$ into above sliding condition, we obtain
% \begin{equation*}
$J_h(x)\left(-\nabla \phi(x) - J_h(x)^\top \lambda_{\mathrm{eq}}(x)\right) = 0$, 
  % \label{eq:slide_dynamics}
% \end{equation*}
which simplifies to the linear system
% \begin{equation*}
    $ J_h(x) J_h(x)^\top \lambda_{\mathrm{eq}}(x) = -J_h(x) \nabla \phi(x)$.
% \label{eq:lambda_eq_linear}
% \end{equation*}
Under the regularity condition $\mathrm{rank}(J_h(x)) = m$, the matrix $J_h(x) J_h(x)^\top$ is invertible, yielding the equivalent control input
% \begin{equation*}
  $\lambda_{\mathrm{eq}}(x) = -\left(J_h(x) J_h(x)^\top\right)^{-1} J_h(x) \nabla \phi(x)$. 
% \label{eq:lambda_eq}
% \end{equation*}
% To maintain invariance of the sliding manifold \eqref{eq: SF}, the sliding condition
% \[
% \dot{h}(x)=J_h(x)\dot{x}=0
% \]
% must hold. Substituting the system dynamics $\dot{x}=-\nabla \phi(x)-J_h(x)^\top\lambda_{\mathrm{eq}}$ yields
% \[
% J_h(x)\nabla \phi(x)+J_h(x)J_h(x)^\top\lambda_{\mathrm{eq}}=0.
% \]
% Since $\operatorname{rank}(J_h(x))=m$, the matrix $J_h(x)J_h(x)^\top$ is invertible, and the equivalent Lagrange multiplier is uniquely determined as
% \[
% \lambda_{\mathrm{eq}}
% =
% -\big(J_h(x)J_h(x)^\top\big)^{-1}J_h(x)\nabla \phi(x).
% \]

\noindent\textit{Part 2 (Reachability Condition): }\\
To establish finite-time convergence to the sliding manifold \eqref{eq: SF}, consider the Lyapunov function candidate
$V(x) = \frac{1}{2}\|h(x)\|_2^2.$
Taking its time derivative along system trajectories yields
% \begin{equation*}
    $\dot{V}(x) = h(x)^\top \dot{h}(x) = h(x)^\top J_h(x) \dot{x}$.
% \end{equation*}
Substituting the closed-loop dynamics $\dot{x} = -\nabla \phi(x) - J_h(x)^\top (\lambda_{\mathrm{eq}}(x) + \lambda_{\mathrm{sw}}(x))$ and using $\lambda_{\mathrm{eq}}(x) = -(J_h(x)J_h(x)^\top)^{-1}J_h(x)\nabla \phi(x)$ with $\lambda_{\mathrm{sw}}(x)= (J_h(x)J_h(x)^\top)^{-1} K  \mathrm{sgn}(h(x))$, we obtain
% \begin{equation*}
    $\dot{V}(x) = -K h(x)^\top \mathrm{sgn}(h(x)) = -K \|h(x)\|_1$,
% \end{equation*}
where $K = \operatorname{diag}(k_1, \dots, k_m)$. Since $\|h(x)\|_1 \geq \|h(x)\|_2$, it follows that
% \begin{equation*}
    $\dot{V}(x) \leq -k_{\min} \|h(x)\|_2 = -k_{\min} \sqrt{2V(x)}$,
% \end{equation*}
where $k_{\min} = \min_i(k_i)$, $i = 1, \dots, m$. Solving the differential inequality $\dot{V}(x) \leq -k_{\min} \sqrt{2V(x)}$ by separation of variables yields
% \begin{equation*}
    $\int_{V(0)}^0 \frac{dV}{\sqrt{V}} \leq -k_{\min} \sqrt{2} \int_0^{t_r} dt \quad \Rightarrow \quad 2\sqrt{V(0)} \leq k_{\min} \sqrt{2} t_r$,
% \end{equation*}
which implies the reaching time is bounded by
$ t_r \leq \frac{\sqrt{2V(0)}}{k_{\min}} = \frac{\sqrt{2} \|h(0)\|_2}{k_{\min}}.$
Hence, the system trajectory reaches the sliding manifold $S(x)$ in finite time.
\end{pf}
\begin{rem}
The complete control law $\lambda = \lambda_{eq}(x) + \lambda_{sw}(x)$ yields:
\begin{equation} \label{eq:closed_loop}
\dot{x} = \underbrace{-P(x)\nabla \phi(x)}_{\text{Feasible descent}} - \underbrace{J_h(x)^\top K  \mathrm{sgn}(h(x))}_{\text{Constraint enforcement}}
\end{equation}
where $P(x) = I - J_h(x)^\top (J_h(x) J_h(x)^\top)^{-1} J_h(x)$ is the orthogonal projector onto $\text{ker}(J_h(x))$  ensuring $h(x) \equiv 0$ for $t \geq t_r$. The closed loop dynamics \eqref{eq:closed_loop} exhibits two phase behavior i.e the switching term drives \( h(x) \to 0 \) in finite time (reaching phase), after which the reduced dynamics \( \dot{x} = -P(x)\nabla \phi(x) \) govern feasible optimization (sliding phase). 
\end{rem}
% The closed-loop dynamics \eqref{eq:closed_loop} exhibits two-phase behavior (as discussed in Table \ref{tab:smc_components} ):

% \begin{table}[h!]
% \centering
% \footnotesize
% \begin{tabular}{@{}p{0.4\columnwidth}p{0.5\columnwidth}@{}}
% \toprule
% \textbf{Component} & \textbf{Function} \\ \midrule
% $\lambda_{\text{eq}}$ & Maintains $h(x)=0$ on sliding manifold \\
% $\lambda_{\text{sw}}$ & Ensures finite-time convergence to manifold \\
% $P(x)\nabla \phi(x)$ & Projects gradient onto tangent space \\
% $J_h(x)^\top$ & Constraint Jacobian mapping \\
% \bottomrule
% \end{tabular}
% \caption{ Key components of proposed sliding mode control}
% \label{tab:smc_components}
% \end{table}

\begin{rem}
Upon reaching the sliding manifold $S$ in \eqref{eq: SF} at time $t_r$, the dynamics reduce to the projected gradient flow $\dot{x}(t) = -P(x(t))\nabla \phi(x(t))$ for $t \ge t_r$, which enforces $h(x(t)) \equiv 0$ thereafter. Under this flow, trajectories converge asymptotically to the set $\mathcal{E} = \{ x^* \in S(x) \mid P(x^*)\nabla \phi(x^*) = 0 \}$ of constrained critical points, where $\nabla \phi(x^*) \in \mathrm{range}(J_h(x^*)^\top)$, satisfying the KKT conditions. Hence, the SMC framework guarantees finite-time constraint enforcement and asymptotic convergence to KKT points. However, since the descent direction $-P(x)\nabla \phi(x)$ is local, global optimality is not guaranteed in general nonconvex settings, and the attained solution may depend on the initialization, as is typical for first-order methods.
\end{rem}

\begin{rem} 
While $\operatorname{sgn}(h)$ is used in theory, practical implementations adopt $\operatorname{sat}(h/\epsilon)$ to reduce chattering. Under discretization with sampling time $\tau$, a perturbation $\delta\lambda(t)$ order of $\mathcal{O}(\tau K)$ induces chattering of amplitude $\mathcal{O}(\tau K \|J_h^\dagger\|)$, confined to the normal subspace without affecting the descent term $-P(x)\nabla \phi(x)$. Thus, optimization dynamics on the constraint manifold are preserved. Choosing $\epsilon \ge \tau K$ ensures continuous control within the boundary layer, at the cost of a steady-state error $\mathcal{O}(\epsilon)$.
\end{rem}

\textit{Example 1:}
Consider the univariate constrained optimization problem:
\begin{equation}
\min_{x \in \mathbb{R}} \frac{1}{2} w x^2 ~~ \text{s.t.} ~~ x = 0,
\end{equation}
where $w > 0$. To achieve finite-time convergence, we design an SMC law based on the sliding variable $S(x) = x$. The equivalent control on the manifold $S(x) = 0$ is $\lambda_{\text{eq}}(x) = -w x$, obtained by enforcing $\dot{s} = 0$, and the switching control is $\lambda_{\text{sw}}(x) = -K\mathrm{sgn}(x)$, with $K > 0$. The resulting closed-loop system becomes:
% \begin{equation}
$\dot{x} = -K\mathrm{sgn}(x)$.
% \end{equation}
Using $V=\tfrac{1}{2}x^2$, the SMC dynamics satisfy $\dot{V}=-K|x|=-\sqrt{2}K V^{1/2}$, which guarantees finite-time convergence with settling-time bound $t_s \le \sqrt{2V(0)}/K$. Table~\ref{tab:comparison} compares existing methods with the proposed SMC, highlighting that while all methods achieve asymptotic convergence to the local KKT point, the proposed method ensures finite-time reachability of the feasible set at $t_s$.
\begin{table}[t]
\centering
\caption{Convergence Comparison}
\label{tab:comparison}
\scriptsize
\setlength{\tabcolsep}{1.2pt}
\renewcommand{\arraystretch}{0.9}
\begin{tabular}{lcccc}
\toprule
Method & Feasibility & Local KKT Conv. & Rate & Params. \\
\midrule
PDGD \cite{arrow1958studies}  
& Asym. & AS 
& \( \|x\|\le ce^{-\lambda t}\|x_0\| \) 
& \(K_i\) \\

PI-CMO \cite{cerone2025new}  
& Asym. & AS 
& \( \|x\|\le ce^{-\alpha t}\|x_0\| \) 
& \(K_p,K_i\) \\

SMC (Prop.)  
& Finite & AS 
& \( t_s \le |x_0|/K \) 
& \(K\) \\
\bottomrule
\end{tabular}
\end{table}
With the ideal control \( \dot{h}=-K\operatorname{sgn}(h) \), finite-time convergence is achieved with \( T_{\text{reach}}=|h(0)|/K \); replacing \( \operatorname{sgn}(h) \) by \( \operatorname{sat}(h/\epsilon) \) yields convergence to a boundary layer \( \|h\|=\epsilon \) with exponential decay, where \(K\) and \(\epsilon\) trade off speed, accuracy, and smoothness, while higher-order methods such as the Super-Twisting Algorithm (Appendix) achieve exact finite-time convergence to feasible set.

\subsection{Robustness of Sliding Mode Optimization Controller}
The sliding mode optimization controller possesses inherent robustness, which can be rigorously analyzed within the framework of differential inclusions and set-valued mappings. Theorem 6 formalizes these robustness properties by considering various perturbation scenarios, including matched, structural uncertainty and $L_\infty$-bounded measurement noise.
\begin{thm}
    Consider the nominal system \eqref{eq: gf dynamics} augmented with a Filippov sliding mode control law:
\begin{equation} \label{eq: FSmc}
    \lambda \in - (J_hJ_h^\top)^{-1} J_h \nabla \phi(x) + K(J_hJ_h^\top)^{-1} \mathrm{SGN}(S(x)),
\end{equation}
Under this control law, the system exhibits the following robustness properties:
\begin{enumerate}
    \item \textit{Matched Disturbances:}
    If the disturbance $\xi(t,x) \in \text{range}(J_h^\top) = \ker(J_h)^\perp$, then the sliding manifold $S(x) = \{x \in \mathbb{R}^n \mid h(x) = 0\}$
remains invariant for the perturbed system:
\begin{equation} \label{eq: perturbed system}
    \dot{x} = -\nabla \phi(x) - J_h^\top \lambda + \xi(t,x).
\end{equation}
where $\xi(t, x) = J_h^\top \eta(t, x)$, where $\|\eta(t, x)\| \leq \bar{\eta}$.
    \item \textit{Structured Uncertainty: }
 If the control matrix is subject to bounded time-varying perturbations $\Delta J_h (t)$, and the modified matrix preserves full rank:
% \begin{equation} \label{eq: Constraint unc}
  $ \inf_{t \geq 0} \sigma_{\min}\left[(J_h + \Delta J_h(t))(J_h + \Delta J_h(t))^\top\right] \geq \alpha > 0$, 
% \end{equation}
Then, the sliding phase and convergence properties are maintained.
    \item \textit{Measurement Noise:}
 If the sliding variable is corrupted by $\mathcal{L}_\infty$- bounded noise $\tilde \eta(t)$ with $\|\tilde \eta(t)\| \leq \delta$, the system evolves as:
% \begin{equation}
    $\dot{x} = -\nabla \phi(x) - J_h^\top \lambda(S(x)_{meas})$,
% \end{equation}
and the state remains within a boundary layer around the manifold, i.e., $\|S(x)\| \leq \Delta(\delta, K)$, where $\Delta$ is a function of the noise level and control gain.
\end{enumerate}
\end{thm}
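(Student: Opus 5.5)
The plan is to reuse the Lyapunov function $V(x)=\tfrac12\|h(x)\|_2^2$ from the reachability part of Theorem~1. Each of the three perturbation scenarios will then be handled by computing $\dot V$ in the Filippov sense. This means evaluating $h^\top J_h\dot x$ for every selection $\nu\in\mathrm{SGN}(h)$ of the set-valued law \eqref{eq: FSmc}. The goal in each case is to show that $\dot V\le -c\sqrt{2V}$ for some $c>0$, either everywhere or outside a neighbourhood of $S(x)$.

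\textbf{Matched disturbances.} Substitute \eqref{eq: FSmc} into \eqref{eq: perturbed system} with $\xi=J_h^\top\eta$. The equivalent-control part cancels $J_h\nabla\phi$, exactly as in Part~1 of Theorem~1, and this leaves
\[
\dot h = J_h\dot x \in -K\,\mathrm{SGN}(h) + J_hJ_h^\top\eta .
\]
Hence
\[
\dot V \le -k_{\min}\|h\|_1+\|h\|_2\,\|J_hJ_h^\top\|\,\bar\eta \le -\big(k_{\min}-\bar\sigma\bar\eta\big)\|h\|_2 ,
\]
where $\bar\sigma$ is a uniform bound on $\|J_hJ_h^\top\|$ along trajectories, which I would assume for $\|J_h\|$ on the region of interest. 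If the gain satisfies $k_{\min}>\bar\sigma\bar\eta$, finite-time reaching follows as in Theorem~1, with $t_r\le \sqrt2\|h(0)\|_2/(k_{\min}-\bar\sigma\bar\eta)$.

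For invariance, I would use Filippov's equivalent control on $h=0$. Requiring $0\in -K\nu+J_hJ_h^\top\eta$ gives $\nu=K^{-1}J_hJ_h^\top\eta$, and this lies in $[-1,1]^m$ under the same gain condition, because $K$ is diagonal. So the sliding mode persists. The sliding dynamics are $\dot x=-P\nabla\phi+P\xi=-P\nabla\phi$, since $P J_h^\top=0$, so the disturbance is rejected completely.

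\textbf{Structured uncertainty.} Let $\tilde J=J_h+\Delta J_h(t)$ be the perturbed Jacobian, and suppose the controller uses $\tilde J$ while the true output map is $\dot h=J_h\dot x$. The dynamics of $h$ then become
\[
\dot h\in -J_hP_{\tilde J}\nabla\phi - J_h\tilde J^\top(\tilde J\tilde J^\top)^{-1}K\,\mathrm{SGN}(h),
\qquad P_{\tilde J}=I-\tilde J^\top(\tilde J\tilde J^\top)^{-1}\tilde J .
\]
Write $J_h\tilde J^\top(\tilde J\tilde J^\top)^{-1}=I-\Delta J_h\tilde J^\top(\tilde J\tilde J^\top)^{-1}=:I-E(t)$. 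The rank assumption gives $\|(\tilde J\tilde J^\top)^{-1}\|\le 1/\alpha$, so $\|E\|\le \|\Delta J_h\|\|\tilde J\|/\alpha=:\epsilon$. Likewise $J_hP_{\tilde J}=-\Delta J_hP_{\tilde J}$ is bounded by $\|\Delta J_h\|$. Treating these terms as a perturbation, I expect
\[
\dot V\le -\big[k_{\min}-\sqrt m\,\epsilon\, k_{\max}-\|\Delta J_h\|\,\|\nabla\phi\|\big]\|h\|_2 .
\]
This again gives finite-time reaching whenever $\epsilon$ is small enough and $K$ is large enough, which needs $\nabla\phi$ bounded on the relevant sublevel set. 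Invariance then follows from the same equivalent-control argument, since $I-E$ is invertible with $\nu$ in the unit box.

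\textbf{Measurement noise.} With $S_{meas}=h+\tilde\eta$, the sign is correct on every component with $|h_i|>\delta$. So
\[
\dot V\le -k_{\min}\sum_{|h_i|>\delta}|h_i|+k_{\max}\sum_{|h_i|\le\delta}|h_i| ,
\]
which is strictly negative once $\|h\|_2$ exceeds roughly $\sqrt m\,\delta\,(1+k_{\max}/k_{\min})$. A sublevel-set (ultimate boundedness) argument then gives $\|S(x)\|\le\Delta(\delta,K)$ of that form.

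I expect the main obstacle to be the structured-uncertainty case. The statement only assumes the full-rank bound $\alpha$, but a robustness conclusion actually needs smallness of $\Delta J_h$ relative to $K$ and $\alpha$, together with boundedness of $\nabla\phi$. I would make these hypotheses explicit. A secondary issue is making the Filippov invariance rigorous, which I would handle by checking that the equivalent-control selection lies in $\mathrm{SGN}(0)$.
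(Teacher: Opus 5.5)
Your proof is correct, but only part~1 follows the paper; parts~2 and~3 take different routes.

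\textbf{Matched disturbances.} This matches the paper: the same $V=\tfrac12\|h\|_2^2$, the same Cauchy--Schwarz bound with $\sigma_{\max}(J_hJ_h^\top)\bar\eta$, and the same gain condition. You add an explicit Filippov check that the equivalent selection $\nu=K^{-1}J_hJ_h^\top\eta$ lies in $[-1,1]^m$, and that the sliding motion is $\dot x=-P\nabla\phi$. That is what ``invariance'' and ``rejection'' actually mean. The paper stops at the reaching-time bound and leaves invariance implicit in $\dot V\le-\epsilon\sqrt{2V}$.

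\textbf{Structured uncertainty.} The paper sets $S=\tilde J_h x$ and writes $\dot S=-K\,\mathrm{SGN}(S)+\tilde J_h\tilde J_h^\top\eta$. It thereby assumes the perturbed Jacobian enters the plant, the controller and the sliding variable consistently. Part~1 is then repeated verbatim with $\sigma_{\max}(\tilde J_h\tilde J_h^\top)$, and $\alpha$ serves only to keep $(\tilde J_h\tilde J_h^\top)^{-1}$ uniformly defined.

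You instead model a genuine mismatch between the Jacobian the algorithm uses and the one through which $h$ evolves. This produces the factor $I-E(t)$ on the switching term and the leakage term $-\Delta J_hP_{\tilde J}\nabla\phi$. The cost is a smallness condition $\sqrt m\,\|E\|\,k_{\max}<k_{\min}$, which is invariant under scaling $K$ and so cannot be met by raising the gain, plus a bound on $\nabla\phi$. The gain is a more informative result. Your remark that $\alpha$ alone cannot suffice is right: in your model, $\tilde J=-J_h$ meets the rank bound yet gives $\dot h\in K\,\mathrm{SGN}(h)$. Taking the true Jacobian equal to $\tilde J$ makes $E$ and the leakage vanish and recovers the paper's version.

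\textbf{Measurement noise.} The paper relies on $\mathrm{SGN}(S+\tilde\eta)\subset\mathrm{SGN}(S)+B_\delta(0)$, which does not hold: noise of size $\delta$ can flip a component's sign, a jump of $2$. Your componentwise sign argument is the sound way to get the boundary layer.

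One small fix to your constant. Splitting at $|h_i|\le\delta$ only gives
$\dot V\le-k_{\min}\|h\|_1+(k_{\min}+k_{\max})m\delta$,
which yields the threshold $m\delta(1+k_{\max}/k_{\min})$. To get the $\sqrt m$ you quote, split instead at $|h_i|\le|\tilde\eta_i|$ and use $\sum_i|\tilde\eta_i|\le\sqrt m\,\delta$ under a Euclidean noise bound. Either way you obtain some $\Delta(\delta,K)$, which is all the statement asks.
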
 
\begin{pf}
We provide a detailed analysis for all considered perturbation scenarios to comprehensively establish the robustness properties of the sliding mode optimization controller.
\begin{enumerate}
    \item \textit{Matched Disturbance Rejection: }
Consider the perturbed system \eqref{eq: perturbed system} with matched disturbances of the form $\xi(t, x) = J_h^\top \eta(t, x)$, where $\|\eta(t, x)\| \leq \bar{\eta}$. Let the sliding surface be defined as in \eqref{eq: SF}. Substituting the control input \eqref{eq: FSmc} into the system yields the sliding variable dynamics:
% \begin{equation*}\label{eq:Sdot}
    $\dot{S(x)} = -K\,\mathrm{SGN}(S(x)) + J_h J_h^\top \eta(t, x)$.
% \end{equation*}
Define the Lyapunov candidate $V(S(x)) = \frac{1}{2} \|S(x)\|_2^2$. Taking its time derivative gives:
% \begin{equation*}
   $ \dot{V} = S(x)^\top \dot{S(x)} = -K \|S(x)\|_1 + S(x)^\top J_h J_h^\top \eta(t, x)$.
% \end{equation*}
Using Cauchy-Schwarz and the upper bound on the disturbance, we get:
\begin{align*}
    \dot{V} &\leq -K \|S(x)\|_2 + \|S(x)\|_2 \|J_h J_h^\top \eta(t,x)\|_2 \nonumber \\
           &\leq -K \|S(x)\|_2 + \sigma_{\max}(J_h J_h^\top) \bar{\eta} \|S\|_2 \nonumber \\
           &= -\epsilon \|S(x)\|_2 = -\epsilon \sqrt{2V},
\end{align*}
where $\epsilon = K - \sigma_{\max}(J_h J_h^\top) \bar{\eta}$.
Finite-time convergence to the sliding manifold is guaranteed if $\epsilon > 0$. Solving the inequality $\dot{V} \leq -\epsilon \sqrt{2V}$ by separation of variables yields:
$t_r \leq \frac{\|S(0)\|_2}{\epsilon}.$
Hence, the system reaches the sliding surface in finite time under bounded matched disturbances.
    \item \textit{Structured Uncertainty: }
Consider the perturbed system \eqref{eq: perturbed system} subject to structured uncertainty in the constraint Jacobian, modeled as $\tilde{J}_h = J_h + \Delta J_h$. Define the sliding surface as $S = \tilde{J}_h x$. Under the proposed control law, its time derivative is given by:
% \begin{equation*}
    $\dot{S(x)} = -K\,\mathrm{SGN}(S(x)) + \tilde{J}_h \tilde{J}_h^\top \eta(t,x)$.
% \end{equation*}
Choose the Lyapunov function $V = \frac{1}{2} \|S\|_2^2$. Its derivative along the system trajectories is:
\begin{align*}
    \dot{V} &= S^\top \dot{S} \leq -K \|S\|_1 + \|S\|_2 \| \tilde{J}_h \tilde{J}_h^\top \eta(t,x) \|_2 \nonumber \\
    &\leq -K \|S\|_2 + \sigma_{\max}(\tilde{J}_h \tilde{J}_h^\top)\bar{\eta} \|S\|_2 \nonumber \\
    &= -\tilde{\epsilon} \|S\|_2 = -\tilde{\epsilon} \sqrt{2V},
\end{align*}
where $\tilde{\epsilon} = K - \sigma_{\max}(\tilde{J}_h \tilde{J}_h^\top)\bar{\eta} > 0$. Therefore, the trajectories converge to the sliding surface in finite time. Solving the differential inequality yields the upper bound on the reaching time:
$ t_r \leq \frac{\|S(0)\|_2}{\tilde{\epsilon}}.$
    \item \textit{Noisy Measurements: }
Assume the measured sliding surface is corrupted by noise, given as $S_{\text{meas}} = S + \tilde{\eta}(t)$, where $\|\tilde{\eta}(t)\| \leq \delta$, and the matched disturbance satisfies $\|\eta(t,x)\| \leq \bar{\eta}$. The sliding dynamics then evolve as:
% \begin{equation*}
    $\dot{S}_{\text{meas}} = -K\,\mathrm{SGN}(S_{\text{meas}}) + J_h J_h^\top \eta(t,x)$.
% \end{equation*}
Let $V = \frac{1}{2} \|S\|_2^2$ be the Lyapunov candidate. Using the property $\mathrm{SGN}(S + \tilde{\eta}) \subset \mathrm{SGN}(S) + \mathscr{B}_\delta(0)$, its derivative satisfies:
\begin{align*}
    \dot{V} &\leq -K \|S\|_2 + \|S\|_2 \left( K \delta + \sigma_{\max}(J_h J_h^\top) \bar{\eta} \right) \nonumber \\
    &= -\left( K - K \delta - \sigma_{\max}(J_h J_h^\top) \bar{\eta} \right) \|S\|_2.
\end{align*}
Thus, the system is uniformly ultimately bounded, and the ultimate bound is given by:
%\begin{equation*}
   $ \|S(x)\|_2 \leq \frac{K\delta + \sigma_{\max}(J_h J_h^\top)\bar{\eta}}{K}$.
%\end{equation*}
\end{enumerate}
\end{pf}
The closed-loop system enjoys strong robustness properties, including a singular perturbation structure with fast sliding and slow projected dynamics on the constraint manifold, input-to-state stability with respect to disturbances, and graph-closedness, ensuring robustness under approximate or perturbed implementations.
%%%%%%%%%%%%%%%%%%%%%%%%%%%%%%%%%%%%%%%%%%%%%%%%%%%%%%%%%%%%%%%%%%%%%%%%%%%%%%%%%%%%%%%%%%%%%%%%%%%%%
\subsection{Finite-Time Nonsingular TSMC Design for Global Optimality in Convex Optimization}
Conventional sliding mode control $S = h(x)$ achieves finite-time constraint satisfaction via $\dot{S}=-K\operatorname{sgn}(S)$, but slows near $h=0$ unless large gains are used. To overcome this, a nonsingular terminal sliding mode (NTSM) \cite{feng2013nonsingular} with $z_1 = h(x)$, $z_2 = \dot h(x)$, and  $S = z_1 + \beta |z_2|^{\gamma} \mathrm{sgn}(z_2), \;\beta>0,\,1<\gamma<2,$ enforces fast, state-dependent finite-time convergence. In the primal dynamics, only the normalized gradient of $f$ as in \cite{diana2025finite} is used rather than $\nabla \mathcal{L} = \nabla \phi + J_h^\top \lambda$, since SMC guarantees $h(x)=0$ in finite time; once the manifold is reached, $\lambda^\top h = 0$ and the optimization reduces to minimizing $\phi$ on the constraint manifold. This decouples the roles: SMC robustly enforces feasibility, while the gradient flow drives the state to the optimum without redundancy or interference from $\lambda$-terms, under bounded disturbance $\|\xi(t)\| \le D$.
\begin{equation} \label {eq:ntsm_sys}
    \mathcal{H}_1
    \begin{cases}
\dot{x} = -\frac{\nabla \phi(x)}{\|\nabla \phi(x)\|^{p}} - J_h(x)^\top \lambda + \xi(t), \quad \dot{\lambda} = u,\\
y=h(x) 
    \end{cases}
\end{equation}
where $p=\frac{b-2}{b-1}, b>2$, $\lambda\in\mathbb{R}^m$ is the dual state, $u\in\mathbb{R}^m$ is the control input. Differentiating $y=h(x)$ twice along \eqref{eq:ntsm_sys} yields $\ddot{y}
= a(x,\lambda,\xi) - J_h(x)J_h(x)^\top u$, establishing a vector relative degree $\{2,\dots,2\}$. The drift term is given by $a(x,\lambda,\xi) = H_h(x)[\dot{x},\dot{x}]
+ J_h(x)J_{f_p}(x)\dot{x}- J_h(x)\dot{J}_h(x)^\top \lambda + J_h(x)\dot{\xi}(t)$, which is bounded under the stated assumptions.

\noindent\textbf{Assumptions.}
(i) $\phi\in\mathcal{C}^2$ is $\mu$-strongly convex with locally Lipschitz gradient;  
(ii) $h\in\mathcal{C}^2$ and $J_h(x)$ has full row rank in a neighborhood of the optimizer;  
(iii) $\xi(t)$ and $\dot{\xi}(t)$ are bounded;  
(iv) all closed-loop signals remain bounded prior to sliding.
\begin{thm}
Consider system \eqref{eq:ntsm_sys} under Assumptions~1--4. Let the control input be designed as
\begin{align}
u &= \left( J_h(x) J_h(x)^{\top} \right)^{-1} \Big[
\bar{a}(x,\lambda)+ \eta \mathrm{sgn}(S) \nonumber\\
&+ \frac{\beta}{\gamma} |z_2|^{1-\gamma} \odot
\big( K_1 S + K_2 |S|^{\rho} \mathrm{sgn}(S) + z_2 \big)
\Big], \label{eq: ntsm control law}
\end{align}
where $K_1,K_2 \succ 0$ are diagonal gain matrices, $0<\rho<1$, $\beta>0$, $1<\gamma<2$, and $\eta>D$. The nonsingular terminal sliding manifold is defined as $S = z_1 + \frac{1}{\beta} |z_2|^{\gamma} \mathrm{sgn}(z_2), \quad z_1=h(x),\; z_2=\dot{h}(x)$. Then, the closed-loop system satisfies:
\begin{enumerate}
\item the sliding variable $S(x)$ converges to zero in finite time $T_1$;
\item the constraints $h(x)=0$ and $\dot{h}(x)=0$ are satisfied in finite time
$T_1+T_2$;
\item the state $x(t)$ converges to the unique KKT point $x^\star$ in finite time
$T_1+T_2+T_3$.
\end{enumerate}
Then, the constrained convex optimization problem is solved exactly in finite
time.
\end{thm}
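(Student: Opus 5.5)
I will prove the three claims in sequence, as three cascaded finite-time phases. Throughout I will use Assumptions (i)--(iv) to keep the drift $a(x,\lambda,\xi)$ bounded and to keep $J_hJ_h^\top$ invertible.

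\textbf{Phase 1: $S\to 0$ in finite time $T_1$.}
First I compute $\dot S$ componentwise. Since $\frac{d}{dt}\big(|z_2|^\gamma\mathrm{sgn}(z_2)\big)=\gamma|z_2|^{\gamma-1}\dot z_2$, I get $\dot S = z_2 + \frac{\gamma}{\beta}|z_2|^{\gamma-1}\odot \ddot y$. Here $\ddot y = a(x,\lambda,\xi) - J_hJ_h^\top u$.

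Next I substitute \eqref{eq: ntsm control law}. I write $a=\bar a+\tilde a$, where $\tilde a$ contains the unknown term $J_h\dot\xi$ (and any model mismatch), with $\|\tilde a\|\le D$. The nominal part $\bar a$ cancels, and the feedback term is built so that the factor $\frac{\gamma}{\beta}|z_2|^{\gamma-1}$ multiplies $\frac{\beta}{\gamma}|z_2|^{1-\gamma}$ to give the identity. The sign of $u$ must be read so that $z_2$ cancels; with the minus sign from $-J_hJ_h^\top u$, this gives
\[
\dot S = -K_1S-K_2|S|^\rho\mathrm{sgn}(S)+\tfrac{\gamma}{\beta}|z_2|^{\gamma-1}\odot(\tilde a-\eta\,\mathrm{sgn}(S)).
\]
With $V_1=\tfrac12\|S\|^2$ and $\eta>D$, the last bracket contributes a nonpositive term wherever $z_2\neq0$. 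Therefore $\dot V_1\le -2k_{1}V_1-k_2 2^{(1+\rho)/2}V_1^{(1+\rho)/2}$, which is the standard fast-terminal inequality. It yields
\[
T_1\le \frac{1}{k_1(1-\rho)}\ln\!\Big(1+\frac{k_1 (2V_1(0))^{(1-\rho)/2}}{k_2}\Big).
\]

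The points where $z_2=0$ and $S\neq0$ need a separate argument. There $\ddot y = a - J_hJ_h^\top u$ is nonzero because of the switching term, so $z_2=0$ is not an attractor away from $S=0$ and trajectories cross it. This is the usual nonsingular TSM argument of \cite{feng2013nonsingular}, and I will invoke it. The factor $|z_2|^{1-\gamma}$ is bounded because $1<\gamma<2$ makes the exponent $1-\gamma>-1$; nonsingularity also requires $\gamma-1\in(0,1)$, which holds.

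\textbf{Phase 2: $h=0$ and $\dot h=0$ by time $T_1+T_2$.}
On $S=0$ each component satisfies $\dot z_{1,i} = -(\beta z_{1,i})^{1/\gamma}\mathrm{sgn}(z_{1,i})$. Because $1/\gamma<1$, this gives finite-time convergence with
\[
T_2\le \max_i \frac{\gamma\,|h_i(T_1)|^{1-1/\gamma}}{\beta^{1/\gamma}(\gamma-1)}.
\]
After that $z_2=0$ as well, so the sliding manifold coincides with the constraint set for all later times.

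\textbf{Phase 3: $x\to x^\star$ by time $T_1+T_2+T_3$.}
For $t\ge T_1+T_2$ we have $h\equiv0$ and $\dot h=J_h\dot x\equiv 0$. This forces the $\lambda$-term to act as a projection: $\dot x = -P(x)\big(g(x)/\|g(x)\|^p\big)$ up to the matched part of $\xi$, which is absorbed in $\lambda$. Here $g=\nabla\phi$. If the disturbance has a tangential component, I would instead require $\xi$ to be matched, i.e. $\xi\in\mathrm{range}(J_h^\top)$, as in the previous theorem.

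Take the Lyapunov function $V_3=\phi(x)-\phi(x^\star)\ge0$, restricted to the manifold. Then $\dot V_3 = -\|Pg\|^2/\|g\|^p$. Strong convexity on the feasible manifold gives a Polyak--Łojasiewicz type bound $\|Pg\|^2\ge 2\mu V_3$. Boundedness of trajectories gives $\|g\|\le G$. So
\[
\dot V_3\le -c\,\|Pg\|^{2-p}\le -c'(2\mu V_3)^{(2-p)/2},
\]
where $(2-p)/2=\tfrac{b}{2(b-1)}<1$. This gives finite-time convergence in the manner of \cite{diana2025finite}. The first inequality uses $\|g\|\ge\|Pg\|$ together with $p>0$, and the exponent $2-p$ lies in $(1,2)$.

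\textbf{Main obstacle.}
The hard step is Phase 3. A projected PL inequality $\|P(x)\nabla\phi(x)\|^2\ge 2\mu(\phi-\phi^\star)$ on a curved manifold $\{h=0\}$ does not follow from strong convexity of $\phi$ alone. For affine $h$ the feasible set is convex, and it follows by restricting $\phi$ to the affine subspace. In general I would assume $h$ affine, or else argue locally near $x^\star$ using Assumption (ii) and the second-order sufficiency of the reduced Hessian, obtaining a local PL constant. Uniqueness of the KKT point also relies on this convexity of the feasible set.
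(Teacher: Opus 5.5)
Your Phases 1 and 2 follow the paper's Steps 1 and 2. Your $\dot S$, which keeps the factor $\frac{\gamma}{\beta}|z_2|^{\gamma-1}$ on the $\eta$ and disturbance terms, is in fact more accurate than the paper's. The genuine gap is in Phase 3. From $\dot V_3=-\|Pg\|^2/\|g\|^p$ you conclude $\dot V_3\le -c\|Pg\|^{2-p}$ by citing $\|g\|\ge\|Pg\|$, but that inequality points the wrong way. It gives $\|Pg\|^2/\|g\|^p\le\|Pg\|^{2-p}$, which is a \emph{lower} bound on $\dot V_3$. What you would need is $\|g\|\le C\|Pg\|$ near $x^\star$. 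That fails whenever the multiplier is nonzero: at the KKT point $P(x^\star)\nabla\phi(x^\star)=0$, while $\nabla\phi(x^\star)=-J_h(x^\star)^\top\lambda^\star\neq 0$.

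This is not an estimate that a better argument could repair. If $\nabla\phi(x^\star)\neq 0$, the factor $\|\nabla\phi\|^{-p}$ is smooth near $x^\star$, so the reduced field $-P\nabla\phi/\|\nabla\phi\|^p$ is locally Lipschitz. Uniqueness of solutions then forbids reaching the equilibrium $x^\star$ in finite time. Quantitatively, on an affine manifold local $L$-smoothness gives $\|Pg\|^2\le 2LV_3$, hence $\dot V_3\ge -(2L/m^p)V_3$, where $m>0$ is a lower bound on $\|\nabla\phi\|$ near $x^\star$; the decay is therefore at most exponential. For a concrete case, take $\phi=\frac12\|x\|^2$ and $h=x_1-1$; on the manifold $\dot x_2=-x_2/(1+x_2^2)^{p/2}$. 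With your actual ingredients ($\|g\|\le G$ plus the projected PL bound), the honest conclusion is $\dot V_3\le-(2\mu/G^p)V_3$, i.e. exponential convergence only.

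The paper's Step 3 has the same problem in disguise. Its constant $\tilde c=\mu/L^p$ for $V=\frac12\|x-x^\star\|^2$ comes from bounding $\|\nabla\phi(x)\|\le L\|x-x^\star\|$, which holds only when $\nabla\phi(x^\star)=0$. So the curvature/PL issue you single out as the main obstacle is real but secondary. Claim 3 needs one of two fixes:
\begin{itemize}
\item add the hypothesis $\lambda^\star=0$, i.e. the unconstrained minimizer is feasible; or
\item normalize the flow by $\|P(x)\nabla\phi(x)\|^p$ instead. Then your $V_3$ argument goes through verbatim for affine $h$: $\dot V_3=-\|Pg\|^{2-p}\le-(2\mu V_3)^{(2-p)/2}$.
\end{itemize}

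There is also a smaller slip in Phase 1. The factor $|z_2|^{1-\gamma}$ is not bounded, since its exponent is negative, so $u$ as written blows up as $z_2\to 0$ with $S\neq 0$. Your Lyapunov bound survives only because this factor cancels in $\dot S$. The leftover term $S_i\frac{\gamma}{\beta}|z_{2,i}|^{\gamma-1}(\tilde a_i-\eta\,\mathrm{sgn}(S_i))$ is nonpositive everywhere, so the separate crossing argument at $z_2=0$ is not needed for the $\dot V_1$ estimate.
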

\begin{pf}
The proof is divided into three steps.
\begin{enumerate}
    \item \textit{Finite-Time Reaching of the Sliding Manifold: }
    Substituting the control law \eqref{eq: ntsm control law} into the $S$-dynamics yields $\dot{S} = -K_1 S - K_2 |S|^{\rho} \mathrm{sgn}(S)-\eta \mathrm{sgn}(S) + \xi(t)$, where $\xi(t)$ is a matched disturbance bounded by $\|\xi(t)\|\leq D$. Consider the Lyapunov function $V_S=\frac{1}{2}S^{\top}S$. Its derivative satisfies
\[
\dot{V}_S
\leq
-\lambda_{\min}(K_1)\|S\|^2
-\lambda_{\min}(K_2)\|S\|^{1+\rho},
\]
since $\eta>D$. $S$ reaches the origin in finite time $T_1$.
    \item  \textit{ Finite-Time Constraint Satisfaction: }
    On the sliding manifold $S=0$, the nonsingular terminal constraint implies $z_1= -\frac{1}{\beta}|z_2|^{\gamma}\mathrm{sgn}(z_2)$. Using $z_2=\dot{z}_1$, the reduced-order dynamics become $\dot{z}_1 = -\beta^{1/\gamma}|z_1|^{1/\gamma}\mathrm{sgn}(z_1)$, which is homogeneous of negative degree. Therefore, $z_1=h(x)$ and $z_2=\dot{h}(x)$ converge to zero in finite time $T_2$.
\item \textit{ Finite-Time Optimality on the Constraint Manifold: }
For all $t\geq T_1+T_2$, the system evolves on $h(x)=0$ with $\dot{h}(x)=0$.
The equivalent dynamics reduce to $\dot{x}= -P(x)\frac{\nabla \phi(x)}{\|\nabla \phi(x)\|^{p}} + P(x)\xi(t)$, where $P(x) = I_n - J_h(x)^{\top}\big(J_h(x)J_h(x)^{\top}\big)^{-1}J_h(x)$ is the projection onto the tangent space of the constraint manifold. Consider the Lyapunov function $V=\frac{1}{2}\|x-x^\star\|^2$. Using $\mu$ strong convexity of $\phi$ and smoothness of $\nabla \phi$, one obtains:
\[
\dot{V}
\leq
-c_1 V^{\frac{2-p}{2}} + c_2 V^{\frac{1}{2}},
\quad 0<p<1,
\]
for positive constants $c_1,c_2$. In the absence of disturbances,
$\dot{V}\leq -\tilde{c}V^{\frac{2-p}{2}}$ near the equilibrium, which guarantees
finite-time convergence to $x^\star$. Hence, the KKT point $(x^\star,\lambda^\star)$ is finite-time stable.
\end{enumerate}
The NTSM-based controller ensures finite-time convergence in three phases: the sliding variable reaches the manifold in $T_1 \le \frac{2}{\lambda_{\min}(K_1)} \ln\Big( 1 + \frac{\lambda_{\min}(K_1) \|s(0)\|^{1-\rho}}{\lambda_{\min}(K_2)(1-\rho)} \Big)$, constraints converge on the manifold in $$T_2 = \frac{\gamma}{\gamma-1} \frac{\max_i |h_i(x(0))|^{(\gamma-1)/\gamma}}{\beta^{1/\gamma}},$$ and the zero dynamics reach the optimum in  $T_3 \le \frac{2}{\tilde{c}(2-p)} \left( \frac{1}{2}\|x(T_2)-x^*\|^2 \right)^{(2-p)/2}, \quad \tilde{c} = \mu/L^p$. Thus, the total convergence time is bounded by \(T_{\rm total} \le T_1 + T_2 + T_3\), with \(T_1\) logarithmic and \(T_2, T_3\) power-law dependent on initial errors.
\end{pf}

\section{Discussion}
In this section, we apply the proposed method to various applications and compare it with existing approaches. For practical implementation, parameter tuning is essential; thus, we provide a concise guideline. First, estimate the disturbance bound $\bar{\eta}$ and noise level $\delta$. Then select $K_{\min} = 2\,\sigma_{\max}(J_h J_h^\top)\bar{\eta}$, and choose $\epsilon = \tau K_{\min} + \delta$ to suppress chattering. Verify the Jacobian condition $\underline{\sigma} > \delta_J \|J_h\|$, and implement the smoothed control $\lambda = -(J_h J_h^\top)^{-1}[J_h \nabla \phi - K \operatorname{sat}(S/\epsilon)]$.

\subsection{Non-convex Quadratic Optimization Problem}
We consider a quadratic nonconvex optimization problem with a linear equality constraint, as studied in \cite{cerone2025new}:
\begin{align}
\min_{x \in \mathbb{R}^n} \quad & \frac{1}{2} x^\top W x \nonumber \\
\text{subject to} \quad & Cx + d = 0,
\end{align}
where $W = \mathrm{diag}\{1, -1\}$, $C = [0; 2]$, and $d = 0$ with invertible $C^TC$. We analyze the perturbed system dynamics given in \eqref{eq: perturbed system}, subject to an external disturbance $\xi(t) = 2\sin(t)$, which satisfies the boundedness condition specified in Theorem 6.  As illustrated in Fig. \ref {fig:comp_disturbance}, the proposed SMC strategy \eqref{eq: FSmc} ensures robust disturbance rejection and enforces convergence to the sliding manifold at $T_C = 0.495~\text{sec}$. 
% \begin{figure}[h!]
%     \centering
%     \includegraphics[width=0.8\linewidth]{opt_comp_disturbance.eps}
%     \caption{This figure shows validation of Theorem 2 by comparing the proposed SMC-based constrained optimization method with PI-CMO \cite{cerone2025new} and PDGD under disturbance. While both PI-CMO and PDGD diverge in the presence of disturbances, the SMC-based approach achieves convergence to the optimal point within $T_C = 0.495 \,\text{sec}$.}
%     \label{fig: Comp_disturbance}
% \end{figure}

\begin{figure}[h!]
\centering

\begin{subfigure}[b]{0.48\linewidth}
\centering
\includegraphics[width=1.3\linewidth]{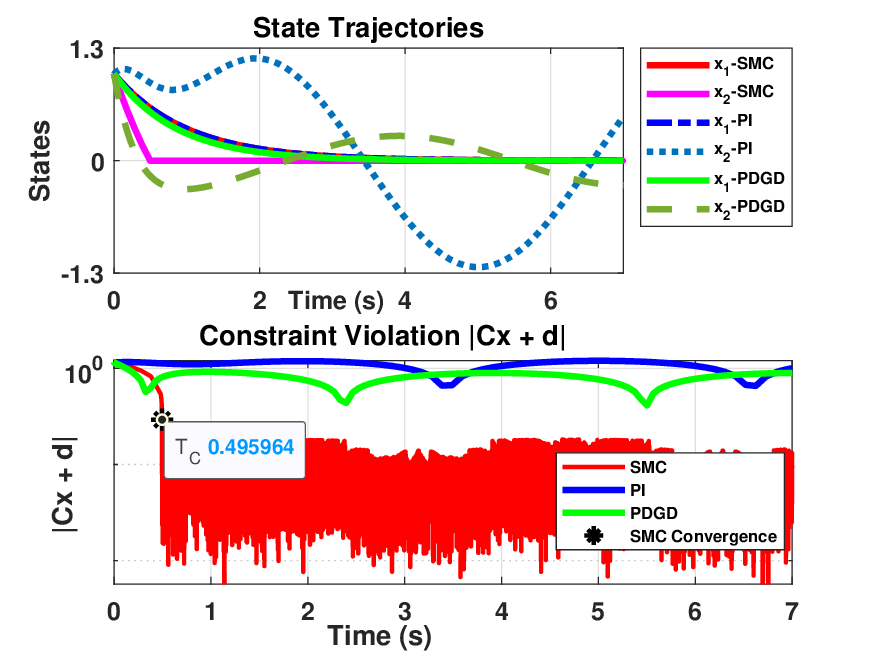}
\caption{}
\label{fig:comp_disturbance}
\end{subfigure}
\hfill
\begin{subfigure}[b]{0.4\linewidth}
\centering
\begin{tikzpicture}[scale=0.6, every node/.style={font=\tiny}]

% Axes
\draw[->, thin, gray] (-2.8,0) -- (3,0) node[below right] {$x$};
\draw[->, thin, gray] (0,-2.2) -- (0,2.5) node[above] {$\dot{x}$};

% Sliding manifold (vertical line)
\draw[thick, blue] (0.7,-2.2) -- (0.7,2.2);
\node[blue, rotate=90] at (0.82,0) {$h(x)=0$};

% Shaded regions
\fill[red!10] (-2.8,-2.2) -- (0.7,-2.2) -- (0.7,2.2) -- (-2.8,2.2) -- cycle;
\fill[green!10] (0.7,-2.2) -- (2.8,-2.2) -- (2.8,2.2) -- (0.7,2.2) -- cycle;

% Reaching trajectories
\draw[->, thick, red] (2.2,1.6) .. controls (1.5,1.0) and (1.1,0.5) .. (0.75,0.12);
\draw[->, thick, red] (2.2,-1.4) .. controls (1.5,-0.9) and (1.1,-0.4) .. (0.75,-0.1);
\draw[->, thick, red] (-1.5,1.8) .. controls (-0.3,1.0) and (0.4,0.5) .. (0.73,0.1);
\draw[->, thick, red] (-1.5,-1.6) .. controls (-0.3,-0.9) and (0.4,-0.4) .. (0.73,-0.08);

% Sliding motion
\draw[->, thick, green!60!black] (0.9,0) -- (2.5,0);
\draw[->, thick, green!60!black] (0.9,0.25) -- (2.1,0.25);
\draw[->, thick, green!60!black] (0.9,-0.25) -- (2.1,-0.25);

% KKT point
\fill[magenta!80!purple] (2.5,0) circle (2.5pt);
\node[below right, magenta!80!purple] at (2.5,0) {$x^*$};

% Phase labels
\node[red, font=\tiny] at (-1.2,1.4) {Reaching};
\node[green!60!black, font=\tiny] at (1.6,0.9) {Sliding};

% Time annotation
\node at (-1.8,-1.7) {$t<T$};
\node at (1.8,-1.7) {$t\ge T$};

% Sliding surface annotation
\node[blue] at (0.7,-1.6) {$\uparrow$};
\node[blue] at (0.7,-1.3) {$S=h=0$};

\end{tikzpicture}
\caption{}
\label{fig:smc_phase}
\end{subfigure}

\caption{(a) Validation of Theorem 6: Comparison of the proposed SMC-based constrained optimization method with PI-CMO \cite{cerone2025new} and PDGD\cite{arrow1958studies} under disturbance. While both PI-CMO and PDGD diverge in the presence of disturbances, the SMC-based approach achieves convergence to the optimal point within $T_C = 0.495 \,\text{sec}$. (b) Phase portrait: Trajectories (red) reach the sliding surface $S(x)=h(x)=0$ (vertical blue line) in finite time $T$, then slide along it (green arrows) toward the KKT point $x^*$ (magenta). The sliding surface is defined as the set $\{x \in \mathbb{R}^n \mid h(x)=0\}$.}
\label{fig:combined}
\end{figure}

Moreover, the associated optimization problem converges to its optimal operating point. In contrast, both the primal-dual gradient descent method and the PI-CMO algorithm proposed in \cite{cerone2025new} exhibit sensitivity to the disturbance, resulting in significant deviation from the optimal trajectory and, in some cases, instability. Additionally conceptual diagram of the proposed theory is shown in Fig. \ref{fig:smc_phase}.  

%%%%%%%%%%%%%%%%%%%%%%%%%%%%%%%%%%%%%%%%%%%%%%%%%%%%%%%%%%%%%%%%%%%%%%%%%%%%%%%%%%%%%%%
\subsection{Obstacle Avoidance}
We illustrate the proposed sliding–mode based constrained optimization framework using a mobile robot navigation problem in a cluttered planar environment. The robot state is $\mathbf{q}=[x~y]^T\in\mathbb{R}^2$ with first–order dynamics $\dot{\mathbf{q}}=\mathbf{u}$. The task is to steer the system from the initial configuration $\mathbf{q}_0$ to the goal $\mathbf{q}_g$ while satisfying the distance–based safety constraints
% \begin{equation}
$h_i(\mathbf{q})=\|\mathbf{q}-\mathbf{q}_{\mathrm{obs}}^i\|-R_s= 0,
\quad 
\mathbf{q}_{\mathrm{obs}}^1=\begin{bmatrix}3\\1\end{bmatrix},~
\mathbf{q}_{\mathrm{obs}}^2=\begin{bmatrix}3\\-1\end{bmatrix}$,
% \end{equation}
with safety radius $R_s=0.8$. The objective function to be minimized is the quadratic potential
% \begin{equation}
$\phi(\mathbf{q})=\tfrac12\|\mathbf{q}-\mathbf{q}_g\|^2$.
% \end{equation}
For baseline comparison, we implement the classical artificial potential field (APF) controller \cite{szczepanski2023safe}
% \begin{equation}
$\mathbf{u}_{\mathrm{APF}}=-\nabla \Phi(\mathbf{q})$,
% \end{equation}
where $\Phi(\mathbf{q})=\Phi_{\mathrm{att}}(\mathbf{q})+\sum_i\Phi_{\mathrm{rep}}^i(\mathbf{q})$ combines a quadratic attractive term and inverse–distance repulsive potentials. Due to the symmetric obstacle configuration, the repulsive gradients balance along the corridor, and the robot becomes trapped at a non-goal equilibrium $\mathbf{q}^\ast\neq \mathbf{q}_g$ satisfying $\nabla\Phi(\mathbf{q}^\ast)=0$. This behavior confirms the well-known local-minima drawback of APF navigation. The projected gradient flow corresponding to the constrained optimization problem
\begin{equation}
\min_{\mathbf{q}}~ \phi(\mathbf{q})
\quad\text{s.t.} ~~~~
h_i(\mathbf{q})= 0
\end{equation}
is given by
% \begin{equation}
$\dot{\mathbf{q}}=-P(\mathbf{q})\nabla \phi(\mathbf{q}),
\quad
P(\mathbf{q})=I-J_h^T(J_hJ_h^T)^{-1}J_h$,
% \end{equation}
The projection operator preserves feasibility but yields only asymptotic convergence, with slow motion along curved obstacle boundaries where the projected descent direction becomes nearly tangent to the feasible set. 
\begin{figure}[h!]
    \centering
    %--- Subfigure 1: Trajectory Evolution ---
    \begin{subfigure}{0.9\linewidth}
        \centering
        \includegraphics[width=0.9\linewidth]{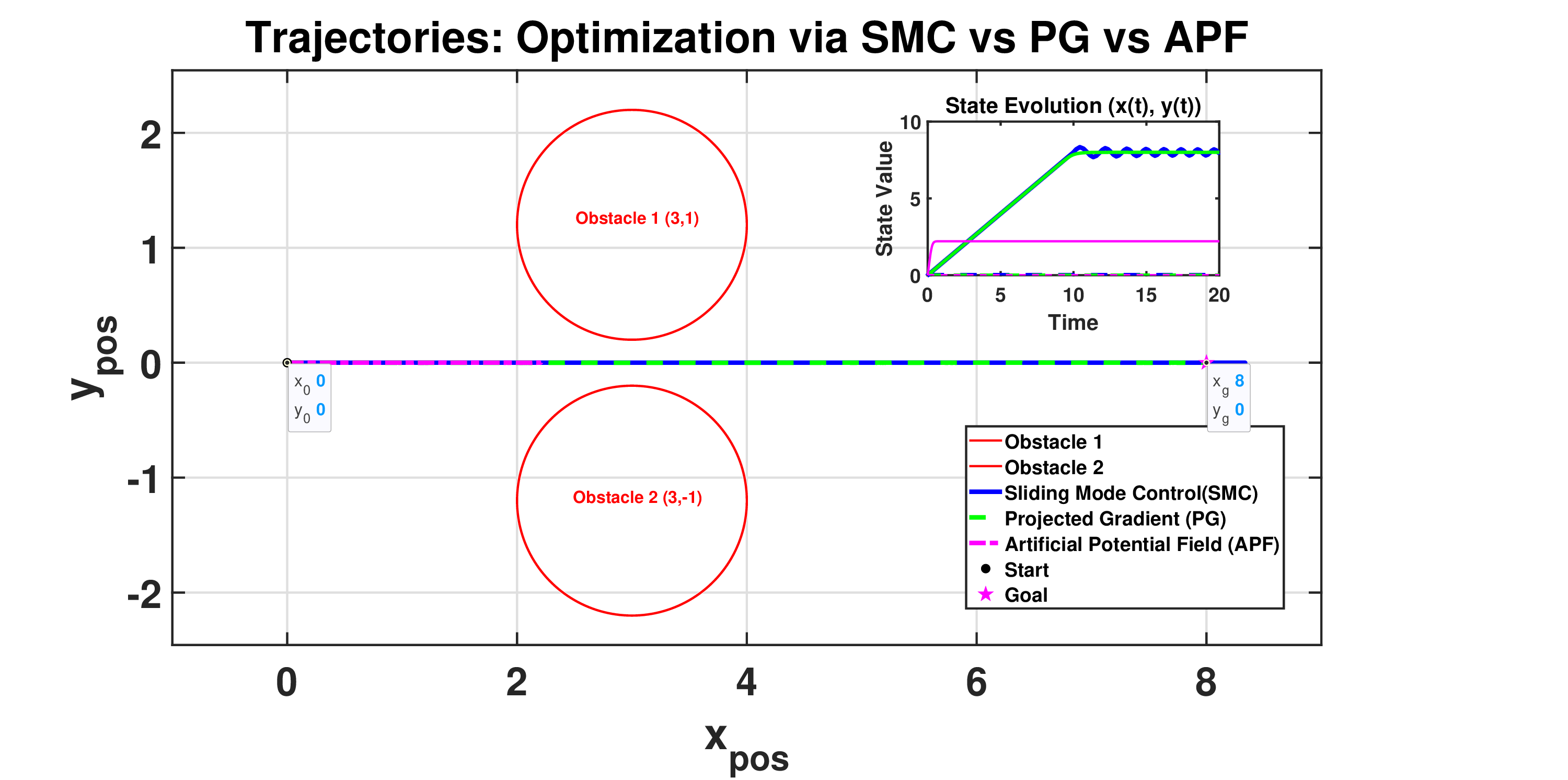}
        \caption{Trajectories Evolution}
        \label{fig:traj}
    \end{subfigure}
    %--- Subfigure 2: Constraint Satisfaction ---
    \begin{subfigure}{0.9\linewidth}
        \centering
        \includegraphics[width=0.9\linewidth]{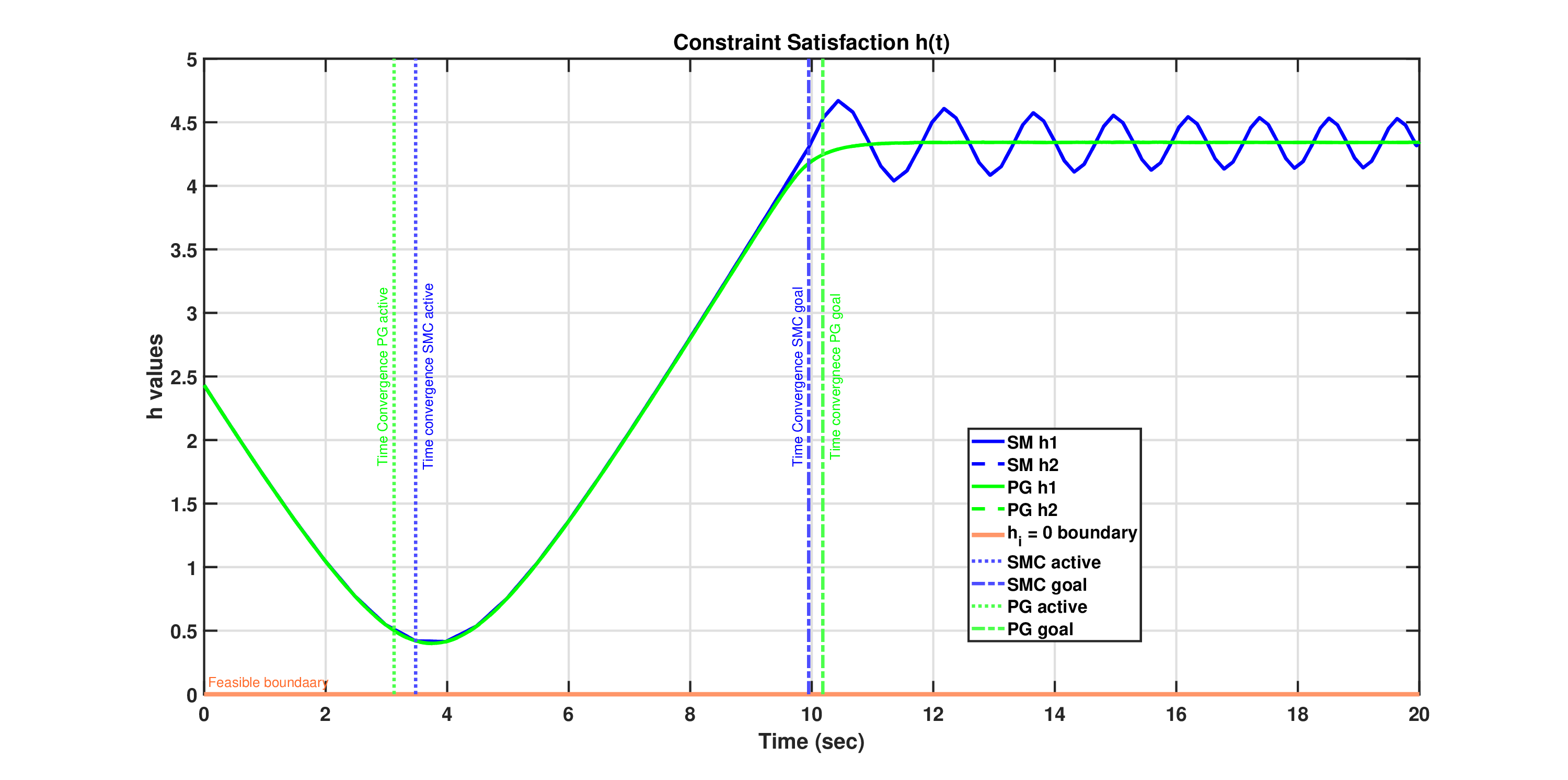}
        \caption{Constraint Satisfaction}
        \label{fig:constraint_satisfaction}
    \end{subfigure}
    \caption{Obstacle avoidance under the proposed SMC law \eqref{eq: perturbed system} in an elliptical narrow passage where APF fails with initial position $\mathbf{q}_0=[0;0]$, $\eta_{\max}=0.3$, and $K=20$. The Fig. \ref{fig:traj} represents a comparison of state trajectories implementing APF, PGF and the proposed model. APF becomes trapped in the narrow passage, whereas the proposed method successfully reaches the goal. Figure \ref {fig:constraint_satisfaction}  shows a comparison with projected gradient flow, faster convergence is achieved, with expected chattering due to the sliding-mode implementation.}
    \label{fig: Trajectory and constraint satisfication}
\end{figure}
The proposed sliding-mode constrained optimization augments the nominal gradient flow with a discontinuous correction, yielding the closed-loop dynamics \eqref{eq: perturbed system} and constraint evolution $\dot{h}=-K\,\mathrm{sgn}(h)+J_hJ_h^{\top}\eta$. Finite--time convergence to $h=0$ is ensured if $K>\sigma_{\max}(J_hJ_h^{\top})\bar{\eta}$, with settling time $T_{\mathrm{SMC}}\le\frac{\sqrt{2}\|h(0)\|}{K-\sigma_{\max}(J_hJ_h^{\top})\bar{\eta}}$. As shown in Fig.~\ref{fig: Trajectory and constraint satisfication}, APF fails due to symmetric repulsion and projected gradient flow converges slowly along obstacle boundaries, whereas the proposed method enforces $h_i(t)=0$ in finite time $0.41~s$, preserves feasibility, and reaches the goal, demonstrating finite-time safety, robustness, and elimination of APF-type local minima.
% % \begin{equation}
% \dot{\mathbf{q}} = 
% -\nabla \phi(\mathbf{q}) - J_h^\top \lambda+J_h^T\eta(t),
% % \end{equation}
%%%%%%%%%%%%%%%%%%%%%%%%%%%%%%%%%%%%%%%%%%%%%%%%%%%%%%%%%%%%%%%%%%%%%%%%%%%%%%%%%%%%%%%%%%%%%%%%%%%%%%
\subsection{Shidoku Puzzle}
Consider Shidoku is a 4x4 version with constraints expressed as 
\begin{equation}
\setlength{\arraycolsep}{1pt}
\begin{array}{rl}
\text{Integer Value:}   & \prod_{h=1}^4 (x_{ij} - h) = 0, \\
\text{Rows:}    & \sum_j x_{ij} = 10,\ \prod_j x_{ij} = 24, \\
\text{Cols:}    & \sum_i x_{ij} = 10,\ \prod_i x_{ij} = 24, \\
\text{Blocks:}  & \sum_{(k,\ell)\in B_p} x_{k\ell} = 10,\ \prod_{(k,\ell)\in B_p} x_{k\ell} = 24, \\
\text{Fixed position:}   & x_{1,2}=1,\ x_{1,4}=4,\ x_{3,1}=2,\ x_{3,4}=3.
\end{array}
\label{eq:shidoku_compact_constraints}
\end{equation}
The Shidoku puzzle is solved using the SMC-based optimization framework defined by the closed-loop dynamics
$
\dot{x} = -J^\top (J_h J_h^\top)^{-1} \left( K \cdot \frac{h}{|h| + \varepsilon} + \alpha h \right),
$
where $K = 5$, $\alpha = 0.1$, and $\varepsilon = 10^{-3}$ ensures smooth approximation of the discontinuous $\mathrm{sgn}$ function to reduce chattering. The linear term $\alpha h$ accelerates convergence toward the constraint manifold, while the signum-like term enforces constraint satisfaction robustly as shown in Fig. \ref{shiduko-combined}. 
\begin{figure}[h!]
    \centering
    \begin{subfigure}[t]{0.48\linewidth}
        \centering
        \includegraphics[width=\linewidth]{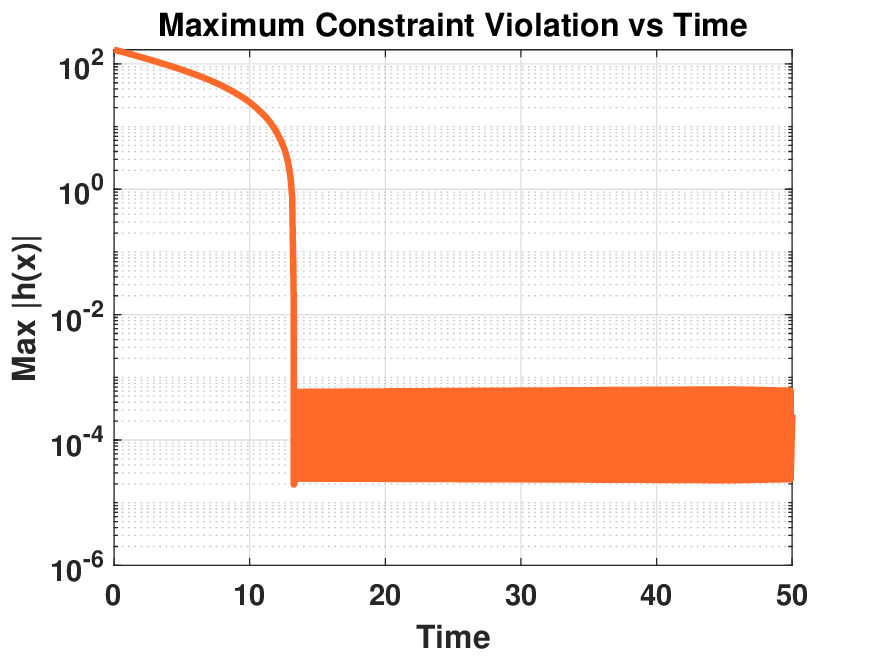}
        \caption{Constraint violation over time}
    \end{subfigure}
    \hfill
    \begin{subfigure}[t]{0.48\linewidth}
        \centering
        \includegraphics[width=\linewidth]{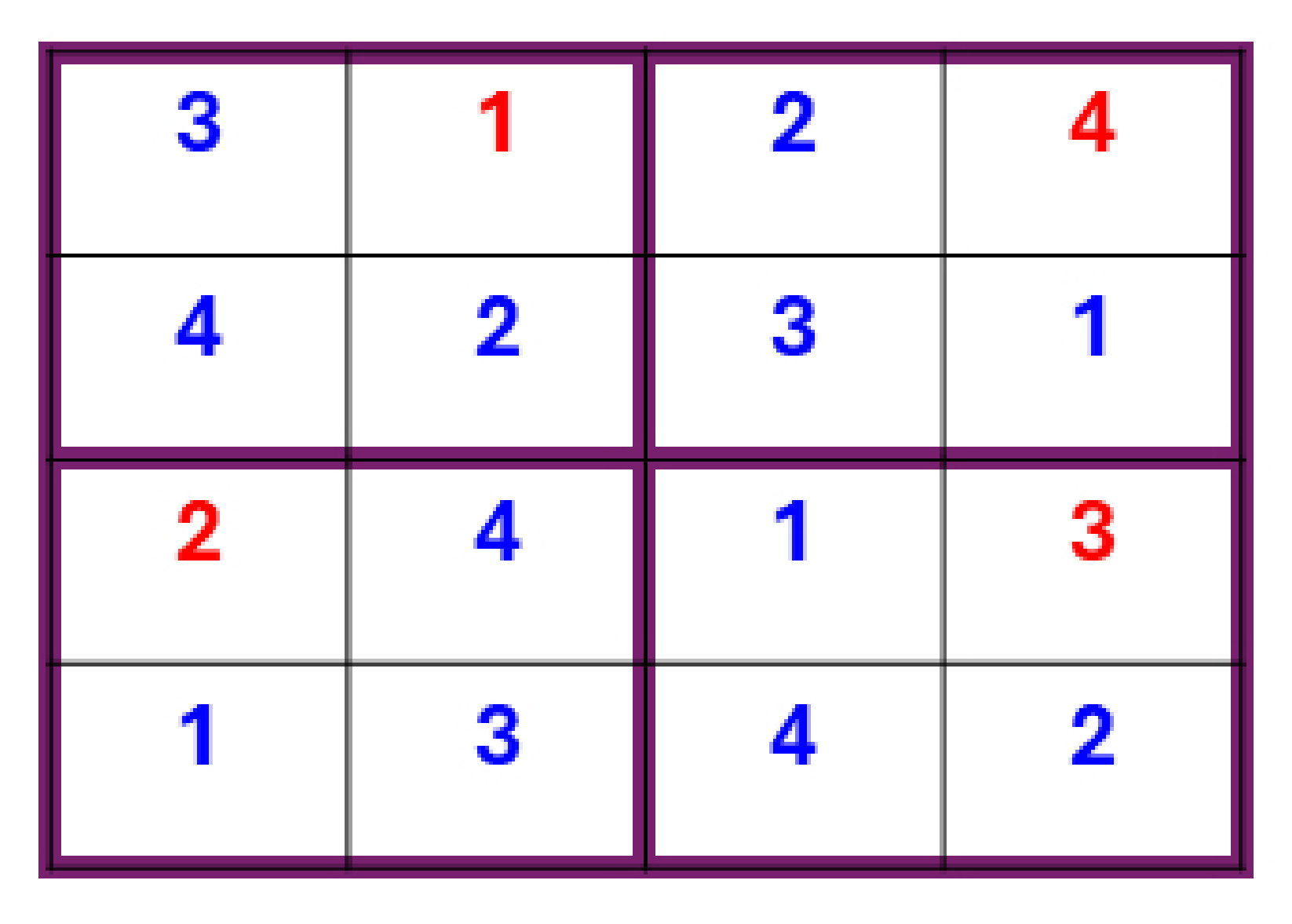}
        \caption{Block structure of the puzzle} 
    \end{subfigure}
    \caption{\textit{Shidoku puzzle visualization:} Fig.(a) shows constraint violations (\(||h(x(t))||_\infty\)) under sliding mode control, showing finite-time convergence with maximum constraint violation: $2.5291e-04$. Fig. (b) Partitioning of the 4×4 grid into four 2×2 blocks, each requiring unique permutations of $\{1,2,3,4\}$. The purple lines highlight block boundaries, with fixed values (\textcolor{red}{Red}) and variables (\textcolor{blue}{Blue}) evolving to satisfy row, column, and block constraints simultaneously.}
    \label{shiduko-combined}
\end{figure}
The initial condition is chosen as $x_0 = \texttt{rand(1,4)}$, corresponding to the 12 variable entries in the Shidoku grid. 
\begin{figure}[h!]
    \centering
    \includegraphics[width=\linewidth]{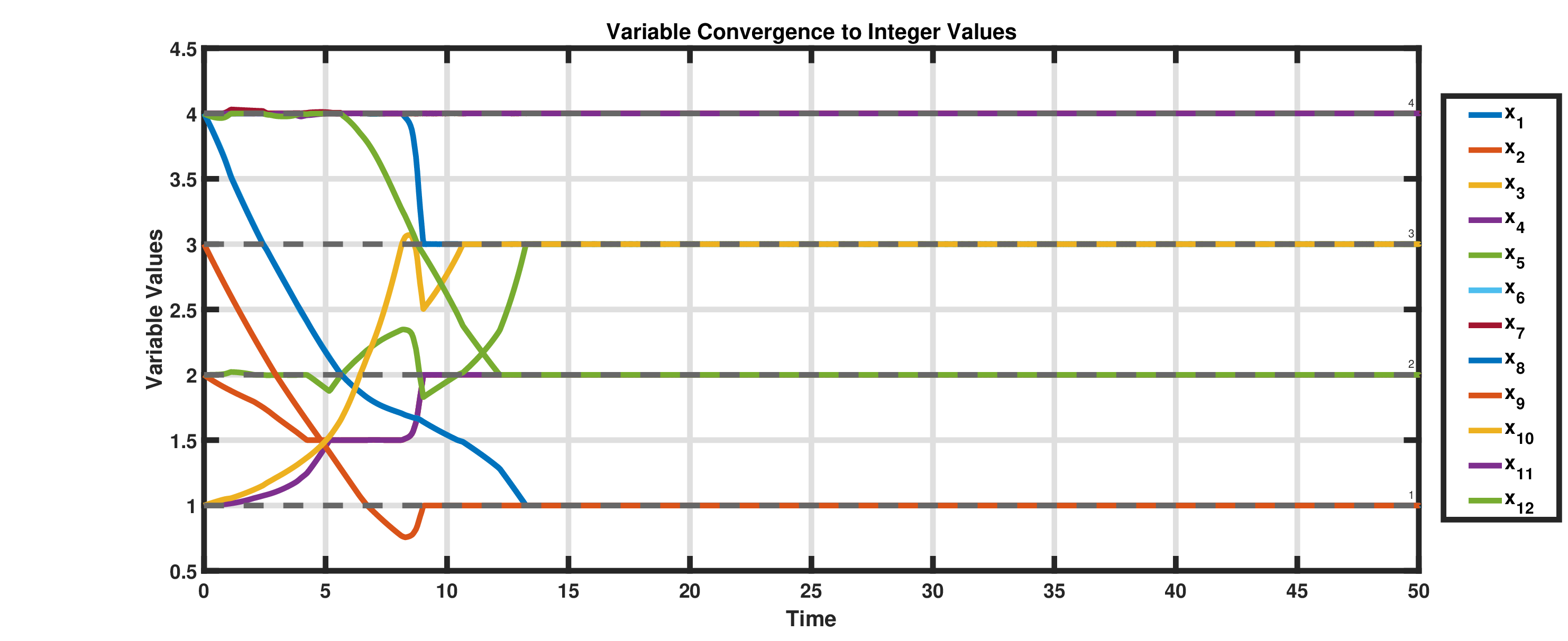}
    \caption{This figure shows the trajectories of all 12 variables $(x_1–x_{12})$ converging to integer solutions {1,2,3,4} under constrained dynamics, with different colours of solid lines marking target values.}
    \label{state-eva_puzzle}
\end{figure} 
The system is simulated using \textit{MATLAB ode45} solver for numerical integration. We perform 5 runs with different random initial conditions, and the solution always converges within a limited time interval $\max_{i} |h_i(x(t))| \leq \epsilon$, shown in Fig. \ref{state-eva_puzzle}.
\begin{rem}
While full row rank of $J_h(x)$ ensures unique equivalent control and KKT recovery, it is not necessary for finite-time feasibility. In the Shidoku problem, $J_h(x)$ is rank-deficient due to redundant constraints; however, finite-time convergence to $S(x)=\{x:h(x)=0\}$ is guaranteed as long as $J_h(x)^\top h(x)\neq 0$ for $h(x)\neq 0$, ensuring nontrivial sliding dynamics without requiring LICQ.
\end{rem}
% In the Shidoku formulation \eqref{eq:shidoku_compact_constraints}, $J_h(x)$ is typically rank-deficient due to redundant constraints. The proposed SMC framework does not require LICQ or full row rank and guarantees finite-time convergence to $S(x)=\{x:h(x)=0\}$ whenever $J_h(x)^{\top}h(x)\neq 0$ for $h(x)\neq 0$.

\subsection{Distributed Parameter Estimation}
Consider $N$ agents communicating over a connected undirected graph with Laplacian $L \in \mathbb{R}^{N \times N}$. Each agent $i$ collects local measurements $y_i = H_i \theta_i + v_i$, where $\theta_i \in \mathbb{R}^n$ is a local estimate of an unknown parameter $\theta^\star$, and $v_i$ is zero-mean Gaussian noise with covariance $R_i \succ 0$. Define the stacked variable
% \begin{equation}
$x = \operatorname{col}(\theta_1,\dots,\theta_N) \in \mathbb{R}^{Nn}$.
% \end{equation}
The distributed maximum likelihood estimation problem is formulated as
\begin{equation}
\begin{aligned}
\min_{x} \quad &
\phi(x) = \frac{1}{2}\sum_{i=1}^{N}
(y_i - H_i\theta_i)^\top R_i^{-1}(y_i - H_i\theta_i) \\
\text{s.t.} \quad &
h(x) = (L \otimes I_n)x = 0,
\end{aligned}
\label{eq:dpe}
\end{equation}
which enforces exact consensus among agents. The gradient and constraint Jacobian are $\nabla \phi(x)=\operatorname{col}(-H_i^\top R_i^{-1}(y_i - H_i\theta_i))$ and $J_h=L\otimes I_n$, respectively. To solve \eqref{eq:dpe}, we are using  \eqref{eq:ntsm_sys} with control input designed as \eqref{eq: ntsm control law}. Let $z_1 = h(x), z_2 = J_h \dot{x}$, and define the nonsingular terminal sliding variable $S = z_1 + \frac{1}{\beta}|z_2|^{\gamma}\mathrm{sgn}(z_2)$. Under this implementation, the consensus constraint $h(x)=0$ is satisfied in finite time, and all agents recover the common optimal estimate $\theta_1 = \cdots = \theta_N = \theta^\star$, where
$\theta^\star =(\sum_{i=1}^{N} H_i^\top R_i^{-1} H_i)^{-1}
(\sum_{i=1}^{N} H_i^\top R_i^{-1} y_i$). 
\begin{figure}[h!]
    \centering
    \includegraphics[width=\linewidth]{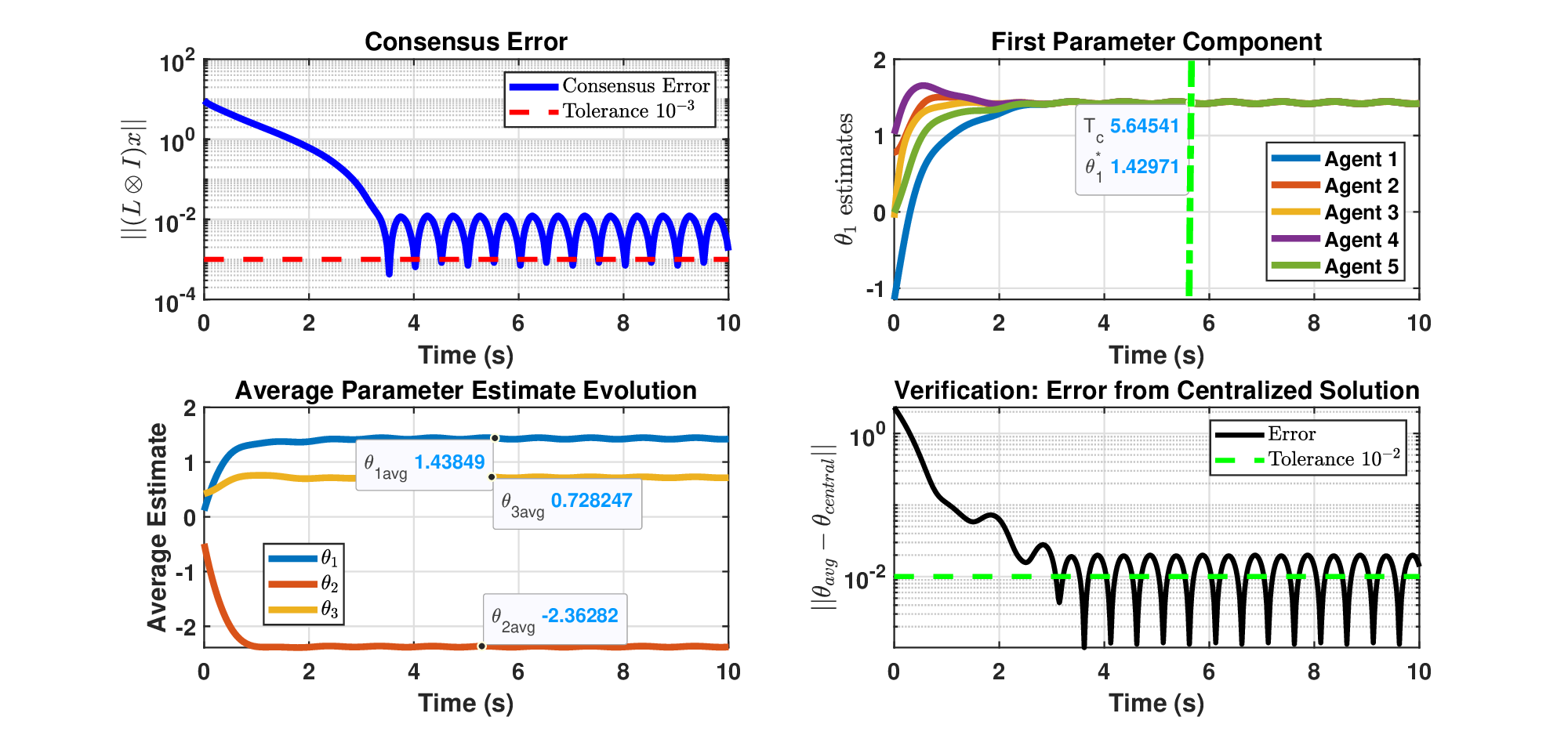}
    \caption{Convergence of all agent parameters to the centralized optimal value  $[1.4315,\,-2.3718,\,0.7213]$ with convergence time $T_c \le 5.5~\mathrm{s}$ in the presence of matched disturbance $\xi = 0.1\sin(2\pi t)\mathbf{1}_{Nn}$, for $N=5$, $n=3$, random initial conditions $x_{N(n)}=\mathrm{randn}(n,1)$, and $\lambda_0=0$.}
    \label{fig:parameter estimations}
\end{figure}
The Fig.~\ref{fig:parameter estimations} shows convergence of each node state to the true centralized value under the NTSMC law of Theorem~7, implemented with parameters $p=0.5$, $\gamma=1.5$, $\rho=0.7$, $\beta=2.0$, $K_1=5I$, $K_2=3I$, and $\eta=0.5$.

%%%%%%%%%%%%%%%%%%%%%%%%%%%%%%%%%%%%%%%%%%%%%%%%%%%%%%%%%%%%%%%%%%%%%%%%%%%%%
\section{Conclusion}
This paper proposed a sliding-mode control framework for continuous-time constrained optimization that guarantees finite-time satisfaction of equality constraints and robustness to matched disturbances and measurement noise. By embedding discontinuous SMC laws into Lagrangian-based dynamics, the optimality conditions were reformulated as a nonsmooth control-affine system, and Lyapunov analysis established finite-time reachability of the constraint manifold, independent of convexity, outperforming PDGD and PI-CMO on nonconvex problems. Finite-time convergence to optimal solutions was further achieved via a normalized gradient flow with NTSMC, demonstrated through distributed parameter estimation. In robot obstacle avoidance, the method outperformed APF (failure in narrow passages) and PGF (only asymptotic constraint satisfaction), while Shidoku feasibility results showed finite-time reachability without full-rank assumptions and recovery of KKT equivalence when the rank condition holds. 

Future work will extend the framework to a discrete-time setting, investigating whether key properties such as finite-time convergence and constraint enforcement are preserved under discretization, and developing suitable modifications when necessary. Additionally, co-authors are actively working on extending the framework to handle inequality constraints using augmented Lagrangian or penalty-based approaches. We will also explore distributed formulations to enable scalable implementations for large-scale systems.

%%%%%%%%%%%%%%%%%%%%%%%%%%%%%%%%%%%%%%%%%%%%%%%%%%%%%%%%%%%%%%%%%%%%%%%%%%%%%%%%%%%%%%%%%%%%%%%%%%%%%%%%%%%%%%%%%%%
 \section{Appendix} \label{STA}
 Since $\dot{h}=u$ (relative degree 1), the discontinuous control can be replaced by the Super-Twisting Algorithm (STA) to ensure continuous control with finite-time convergence. The STA is
\begin{equation}
    u = -K_1 |h|^{1/2}\operatorname{sgn}(h) + z,\quad \dot{z} = -K_2 \operatorname{sgn}(h),\quad K_1,K_2>0,
\end{equation}
leading to the second-order sliding dynamics $\dot{h} = -K_1 |h|^{1/2}\operatorname{sgn}(h) + z$, $\dot{z} = -K_2 \operatorname{sgn}(h)$. The control is continuous since the discontinuity is only in $\dot{z}$. Finite-time convergence to $h=0$, $\dot{h}=0$ holds if $K_2>0$, $K_1>2\sqrt{K_2}$; a practical choice is $K_1=1.5\sqrt{K_2}$. The Super-Twisting Algorithm offers an attractive alternative, preserving finite-time convergence of SMC while ensuring continuous control and eliminating chattering for relative degree 1 systems.
\begin{table}[h!]
\centering
% \caption{\textcolor{blue}{Comparison between Ideal SMC, Smooth SMC, and STA}}
\scriptsize
\setlength{\tabcolsep}{2.8pt}
\renewcommand{\arraystretch}{1.1}
\resizebox{\columnwidth}{!}{
\begin{tabular}{|c|c|c|c|c|}
\hline
\textbf{Method} & \textbf{Dynamics} & \textbf{FT} & \textbf{Continuity} & \textbf{Chattering} \\
\hline
Ideal SMC & $\dot{h} = -K\,\operatorname{sgn}(h)$ & Yes & Discontinuous & High \\
\hline
Smooth SMC & $\dot{h} = -K\,\operatorname{sat}(h/\epsilon)$ & No & Continuous & Low \\
\hline
STA & 
$\begin{aligned}
\dot{h} &= -K_1 |h|^{1/2}\operatorname{sgn}(h) \\
\dot{z} &= -K_2 \operatorname{sgn}(h)
\end{aligned}$ 
& Yes & Continuous & None \\
\hline
\end{tabular}
}

\label{tab:smc_comparison}
\end{table}

\bibliographystyle{ieeetr}        % Include this if you use BibTeX 
\bibliography{ref}        
% \appendix
% \section{A summary of Latin grammar}    % Each appendix must have a short title.
% \section{Some Latin vocabulary}         % Sections and subsections are supported  
                                        % in the appendices.
\end{document}